\theoremstyle{plain}
\newtheorem{theorem}{Theorem}[section]
\newtheorem{conjecture}[theorem]{Conjecture}
\newtheorem{proposition}[theorem]{Proposition}
\newtheorem{lemma}[theorem]{Lemma}
\newtheorem{corollary}[theorem]{Corollary}
\theoremstyle{definition}
\newtheorem{definition}[theorem]{Definition}
\newtheorem{example}[theorem]{Example}
\numberwithin{equation}{section}
\newcommand{\la}{\lambda}
\newcommand{\lp}{{\la^+}}
\newcommand{\mm}{{\mu^-}}
\newcommand{\si}{\sigma}
\newcommand{\hcirc}[1]{\!\!\raisebox{0.5pt}{\textcircled{\raisebox{-1pt}{#1}}}}
\newcommand{\ucirc}[1]{\!\!\raisebox{0.5pt}{\textcircled{\raisebox{-1pt}{#1}}}}
\newcommand{\SSYT}{SSYT}
\newcommand{\SSYTs}{SSYTs}
\newcommand{\extern}{\leftarrow}
\newcommand{\intern}{\leftarrow}
\newcommand{\rowinsert}{\leftarrow}
\newcommand{\D}{\mathrm{D}}
\newcommand{\U}{\mathrm{U}}
\newcommand{\isgood}{exits right}
\newcommand{\begood}{exit right}
\newcommand{\assyt}{anti-semistandard Young tableau}
\newcommand{\ASSYT}{ASSYT}
\newcommand{\ASSYTs}{ASSYTs}
\journal{Journal of Combinatorial Theory, Series A}
\begin{document}

\begin{frontmatter}

\title{A Pieri rule for skew shapes}

\author{Sami H. Assaf\fnref{fn1}}
\address{Department of Mathematics, Massachusetts Institute of Technology, Cambridge, MA 02139, USA}
\ead{sassaf@math.mit.edu}
\fntext[fn1]{Supported by NSF Postdoctoral Fellowship DMS-0703567}

\author{Peter R. W. McNamara}
\address{Department of Mathematics, Bucknell University, Lewisburg, PA 17837, USA}
\ead{peter.mcnamara@bucknell.edu}

\begin{keyword}
  Pieri rule \sep skew Schur functions \sep  Robinson-Schensted
  \MSC 05E05 \sep 05E10 \sep 20C30
\end{keyword}

\begin{abstract} 
  The Pieri rule expresses the product of a Schur function and a
  single row Schur function in terms of Schur functions.  We extend
  the classical Pieri rule by expressing the product of a skew Schur
  function and a single row Schur function in terms of skew Schur
  functions.  Like the classical rule, our rule involves simple
  additions of boxes to the original skew shape.
  Our proof is purely combinatorial and extends
the combinatorial proof of the classical case.  
\end{abstract}

\end{frontmatter}

\section{Introduction}\label{sec:intro} 

The basis of Schur functions is arguably the most interesting and
important basis for the ring of symmetric functions.  This is due not
just to their elegant combinatorial definition, but more broadly to
their connections to other areas of mathematics.  For example, they
are intimately tied to the cohomology ring of the Grassmannian, and
they appear in the representation theory of the symmetric group and of
the general and special linear groups.

It is therefore natural to consider the expansion of the product
$s_\la s_\mu$ of two Schur functions in the basis of Schur functions.
The Littlewood--Richardson rule \cite{LiRi34,Sch77,ThoThesis,Tho78},
which now comes in many different forms (\cite{ECII} is one starting
point), allows us to determine this expansion.  However, more basic
than the Littlewood--Richardson rule is the Pieri rule, which gives a
simple, beautiful and more intuitive answer for the special case when
$\mu = (n)$, a partition of length 1.  Though we will postpone the
preliminary definitions to Section~\ref{sec:prelims} and the statement
of the Pieri rule to Section~\ref{sec:skewpieri}, stating the rule in
a rough form will give its flavor.  For a partition $\la$ and a
positive integer $n$, the Pieri rule states that $s_\la s_n$ is a sum
of Schur functions $s_{\lp}$, where $\lp$ is obtainable by adding
cells to the diagram of $\la$ according to a certain simple rule.  The
Pieri rule's prevalence is highlighted by its adaptions to many other
settings, including Schubert polynomials
\cite{LaSc82,LeSo07,Man98,Sot96,Win98}, LLT polynomials \cite{Lam05},
Hall--Littlewood polynomials \cite{Mor64}, Jack polynomials
\cite{Las89,Sta89}, and Macdonald polynomials
\cite{Koo88,Mac95}.

It is therefore surprising that there does not appear to be a known
adaption of the Pieri rule to the most well-known generalization of
Schur functions, namely skew Schur functions.  We fill this gap in the
literature with a natural extension of the Pieri rule to the skew
setting.  Reflecting the simplicity of the classical Pieri rule, the
skew Pieri rule states that for a skew shape $\la/\mu$ and a positive
integer $n$, $s_{\la/\mu} s_n$ is a signed sum of skew Schur functions
$s_{\lp/\mm}$, where $\lp/\mm$ is obtainable by adding cells to the
diagram of $\la/\mu$ according to a certain simple rule.  Our proof is
purely combinatorial, using a sign-reversing involution that reflects
the combinatorial proof of the classical Pieri rule.  After reading an earlier version of this manuscript, which included an algebraic proof of the case $n=1$ due to Richard Stanley, Thomas Lam provided a complete algebraic proof of our skew Pieri rule.

It is natural to ask if our skew Pieri rule can be extended to give a ``skew'' version of the Littlewood--Richardson rule, and we include such a rule as a conjecture in Section~\ref{sec:conclusion}.  This conjecture has been proved by Lam, Aaron Lauve and Frank Sottile in \cite{LLS09pr} using Hopf algebras.  It remains an open problem to find a combinatorial proof of the skew Littlewood--Richardson rule.

The remainder of this paper is organized as follows.  In
Section~\ref{sec:prelims}, we give the necessary symmetric function
background.  In Section~\ref{sec:skewpieri}, we state the classical
Pieri rule and introduce our skew Pieri rule.  In
Section~\ref{sec:insertion}, we give a variation from \cite{SaSt90} of
the Robinson--Schensted--Knuth algorithm, along with relevant
properties.  This algorithm is then used in
Section~\ref{sec:involution} to define our sign-reversing involution,
which we then use to prove the skew Pieri rule.  We conclude in
Section~\ref{sec:conclusion} with two connections to the Littlewood--Richardson rule.
Lam's algebraic proof appears in Appendix~\ref{sec:algebraic}.

\section{Preliminaries}\label{sec:prelims}

We follow the terminology and notation of \cite{Mac95,ECII} for
partitions and tableaux, except where specified.  Letting $\mathbb{N}$
denote the nonnegative integers, a \emph{partition} $\la$ of $n \in \mathbb{N}$ is a
weakly decreasing sequence $(\la_1, \la_2, \ldots \la_l)$ of positive
integers whose sum is $n$.  It will be convenient to set $\la_k = 0$
for $k > l$.  We also let $\emptyset$ denote the unique partition with
$l=0$.  We will identify $\la$ with its \emph{Young diagram} in
``French notation'': represent the partition $\la$ by the unit square
cells with top-right corners $(i,j) \in \mathbb{N} \times\mathbb{N}$
such that $1 \leq i \leq \la_j$.  For example, the partition
$(4,2,1)$, which we abbreviate as 421, has Young diagram
\setlength{\unitlength}{4mm}
\[
\begin{picture}(4,3)(0,0)
\put(0,0){\line(1,0){4}}
\put(0,1){\line(1,0){4}}
\put(0,2){\line(1,0){2}}
\put(0,3){\line(1,0){1}}
\multiput(0,3)(1,0){2}{\line(0,-1){3}}
\put(2,2){\line(0,-1){2}}
\multiput(3,1)(1,0){2}{\line(0,-1){1}}
\put(4.5,1.5){.}
\end{picture}
\]
Define the \emph{conjugate} or \emph{transpose} $\la^t$ of $\la$ to be
the partition with $\la_i$ cells in column $i$.  For example, $421^t =
3211$.  For another partition $\mu$, we write $\mu \subseteq \la$
whenever $\mu$ is contained within $\la$ (as Young diagrams);
equivalently $\mu_i \leq \la_i$ for all $i$. In this case, we define
the {\em skew shape} $\la / \mu$ to be the set theoretic difference
$\la - \mu$.  In particular, the partition $\la$ is the skew shape
$\la/\emptyset$.  We call the number of cells of $\la/\mu$ its
\emph{size}, denoted $|\la/\mu|$.  We say that a skew shape forms a
\emph{horizontal strip} (resp.\ \emph{vertical strip}) if it
contains no two cells in the same column (resp.\ row).  A
\emph{$k$-horizontal strip} is a horizontal strip of size $k$, and
similarly for vertical strips.  For example, the skew
shape $421/21$ is a 4-horizontal strip: \setlength{\unitlength}{4mm}
\[
\begin{picture}(4,3)(0,0)
\put(2,0){\line(1,0){2}}
\put(1,1){\line(1,0){3}}
\put(0,2){\line(1,0){2}}
\put(0,3){\line(1,0){1}}
\put(0,3){\line(0,-1){1}}
\put(1,3){\line(0,-1){2}}
\put(2,2){\line(0,-1){2}}
\multiput(3,1)(1,0){2}{\line(0,-1){1}}
\put(4.5,1.5){.}
\end{picture}
\]
With another skew shape $\sigma/\tau$, we let $(\la/\mu) *
(\sigma/\tau)$ denote the skew shape obtained by positioning $\la/\mu$
so that its bottom right cell is immediately above and left of the top
left cell of $\sigma/\tau$.  For example, the horizontal strip 421/21
above could alternatively be written as $(21/1) * (2)$ or as $(1) *
(31/1)$.

A {\em Young tableau} of shape $\la/\mu$ is a map from the cells of $\la/\mu$ to the positive integers.
A {\em semistandard Young tableau} (SSYT) is
such a filling which is weakly increasing from left-to-right along
each row and strictly increasing up each column, such as
\setlength{\unitlength}{4mm}
\[
\begin{picture}(4,3)(0,0)
\put(1,0){\line(1,0){3}}
\put(0,1){\line(1,0){4}}
\put(0,2){\line(1,0){3}}
\put(0,3){\line(1,0){1}}
\put(0,3){\line(0,-1){2}}
\put(1,3){\line(0,-1){3}}
\multiput(2,2)(1,0){2}{\line(0,-1){2}}
\put(4,1){\line(0,-1){1}}
\put(1.3,0.2){1}
\put(2.3,0.2){2}
\put(3.3,0.2){7}
\put(0.3,1.2){3}
\put(1.3,1.2){3}
\put(2.3,1.2){5}
\put(0.3,2.2){5}
\put(4.5,1.5){.}
\end{picture}
\]
The \emph{content} of an SSYT $T$ is the sequence $\pi$ such
that $T$ has $\pi_i$ cells with entry $i$; in this case $\pi =
(1,1,2,0,2,0,1)$.

We let $\Lambda$ denote the ring of symmetric functions in the
variables $x = (x_1, x_2, \ldots)$ over $\mathbb{Q}$, say.  We will
use three familiar bases from \cite{Mac95,ECII} for $\Lambda$: the
elementary symmetric functions $e_{\la}$, the complete homogeneous
symmetric functions $h_{\la}$ and, most importantly, the \emph{Schur
  functions} $s_{\la}$. The Schur functions form an orthonormal basis
for $\Lambda$ with respect to the Hall inner product and may be
defined in terms of \SSYTs\ by
\begin{equation}\label{equ:schur}
  s_{\la} = \sum_{T \in \mathrm{\SSYT}(\la)} x^{T} ,
\end{equation}
where the sum is over all \SSYTs\ of shape $\la$ and where $x^T$
denotes the monomial $x_{1}^{\pi_1} x_{2}^{\pi_2} \cdots$ when $T$ has
content $\pi$.  Replacing $\la$ by $\la/\mu$ in \eqref{equ:schur}
gives the definition of the {\em skew Schur function} $s_{\la/\mu}$,
where the sum is now over all \SSYTs\ of shape $\la/\mu$.  For
example, the \SSYT\ shown above contributes the monomial $x_1 x_2
x_3^2 x_5^2 x_7$ to $s_{431/1}$.

\section{The skew Pieri rule}\label{sec:skewpieri}

The celebrated Pieri rule gives an elegant method for expanding the
product $s_{\la} s_n$ in the Schur basis.  This rule was originally
stated in \cite{Pie1893} in the setting of Schubert Calculus. Recall
that the single row Schur function $s_n$ equals the complete
homogeneous symmetric function $h_n$. Recall also the involution
$\omega$ on $\Lambda$, which may be defined by sending the Schur
function $s_{\la}$ to $s_{\la^t}$ or equivalently by sending $h_k$ to
$e_k$. Thus the Schur function $s_{1^n}$ equals the elementary
symmetric function $e_{n}$, where $1^n$ denotes a single column of
size $n$.

\begin{theorem}[\cite{Pie1893}]\label{thm:pieri} 
  For any partition $\la$ and positive integer $n$, we have
  \begin{equation}\label{equ:pieri}
    s_\la s_n = s_{\la} h_n = \sum_
    {\lp/\la \ n\text{-}\mathrm{hor.\ strip}}
    s_{\lp}, 
        \end{equation}
        where the sum is over all partitions $\lp$ such that $\lp/\la$
        is a horizontal strip of size $n$.
\end{theorem}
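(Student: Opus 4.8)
The plan is to prove the identity combinatorially, by means of a weight-preserving bijection built from Robinson--Schensted--Knuth (RSK) row insertion---precisely the classical mechanism whose skew analogue will drive the rest of the paper. Expanding both sides via the monomial definition \eqref{equ:schur}, the left-hand side $s_\la h_n = s_\la s_{(n)}$ is the generating function $\sum x^T x^U$ summed over all pairs $(T,U)$, where $T$ is an $\SSYT$ of shape $\la$ and $U$ is an $\SSYT$ of the single-row shape $(n)$, i.e.\ a weakly increasing word $u_1 \le u_2 \le \cdots \le u_n$. The right-hand side is $\sum x^{P}$ summed over all $\SSYTs$ $P$ whose shape $\lp$ satisfies that $\lp/\la$ is a horizontal strip of size $n$. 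Since $x^T x^U$ is exactly the monomial recording the combined content, it suffices to exhibit a content-preserving bijection between the set of such pairs $(T,U)$ and the set of such tableaux $P$.

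For the forward map I would send $(T,U)$ to the tableau $P = T \rowinsert U$ obtained by row-inserting the entries $u_1, u_2, \ldots, u_n$ into $T$ in that (left-to-right) order. Since RSK insertion never changes the multiset of entries, $P$ has content equal to that of $T$ together with that of $U$, so the weight is preserved. The essential structural point is the row-bumping lemma: when one inserts $u \le v$ consecutively, the bumping route of $v$ lies strictly to the right of that of $u$, and the new cell created by $v$ sits in a strictly larger column than the new cell created by $u$. Applying this to the weakly increasing sequence $U$, the $n$ newly created cells occupy $n$ distinct columns, so the set of added cells $\lp/\la$ is exactly a horizontal $n$-strip. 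Hence $P$ indeed lands in the target set.

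For the inverse map, given $P$ of shape $\lp$ together with the distinguished horizontal strip $\lp/\la$, I would perform reverse row insertion, reverse-bumping the cells of the strip from the rightmost column to the leftmost. Because the strip occupies distinct columns, there is a well-defined rightmost new cell to start from, and each reverse bump ejects one entry out of the bottom row; these ejected entries, read in order of extraction, assemble into the word $U$, while the tableau that remains is an $\SSYT$ of shape exactly $\la$, namely $T$. The content check is immediate, and the two maps are mutually inverse because forward and reverse bumping undo one another cell by cell.

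The main obstacle, and the place where all the real care is needed, is the row-bumping lemma in both directions. In the forward direction one must verify that inserting a weakly increasing word genuinely produces added cells in strictly increasing columns (so that no two new cells share a column); in the reverse direction one must verify that reverse-bumping the cells of a horizontal strip in right-to-left order ejects a \emph{weakly increasing} sequence and leaves behind precisely the shape $\la$. Establishing that the correct order of reverse bumping recovers a legitimate single-row $\SSYT$, rather than an arbitrary word, is the crux; everything else is bookkeeping on contents. This is exactly the classical argument that the sign-reversing involution of Section~\ref{sec:involution} will extend to the skew setting.
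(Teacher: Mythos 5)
Your proposal is correct and matches the paper's proof: the paper also establishes a weight-preserving bijection between SSYTs of shape $\la * (n)$ (equivalently, pairs $(T,U)$) and SSYTs of shape $\lp$ with $\lp/\la$ a horizontal $n$-strip, via row insertion of the weakly increasing entries and reverse row insertion of the strip from right to left, with the row-bumping lemma (Lemma~\ref{lem:horizontal}(a),(b),(c)) supplying exactly the structural facts you identify as the crux.
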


Applying the involution $\omega$ to \eqref{equ:pieri}, we get the dual
version of the Pieri rule:
\begin{equation}\label{equ:dualpieri}
  s_\la s_{1^n} = s_\la e_n = \sum_{\lp/\la \ n\text{-}\mathrm{vert.\ strip}}
  s_\lp\ ,
\end{equation}
where the sum is now over all partitions $\lp$ such that $\lp/\la$ is
a vertical strip of size $n$.  

A simple application of Theorem~\ref{thm:pieri} gives
\[
s_{322} s_2 = s_{3222} + s_{3321} + s_{4221} + s_{432} + s_{522},
\]
as represented diagrammatically in Figure~\ref{fig:pieriexample}.

\begin{figure}[htbp]
  \begin{center}
    \[
    \scalebox{.7}{\includegraphics{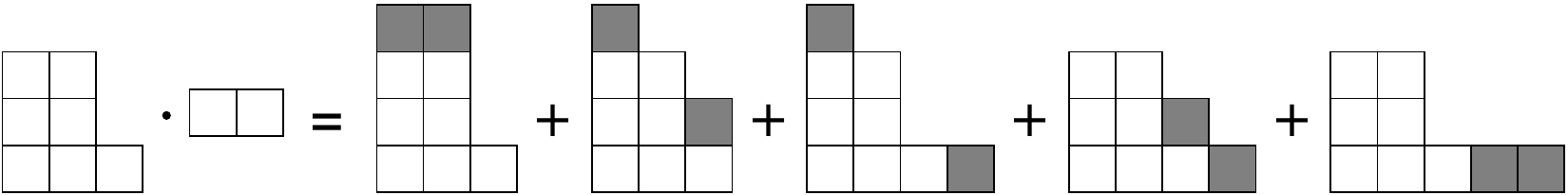}}
    \]
    \caption{The expansion of $s_{322} s_2$ by the Pieri rule.}
    \label{fig:pieriexample}
  \end{center}
\end{figure}

Given the simplicity of \eqref{equ:pieri}, it is natural to hope for a
simple expression for $s_{\la/\mu} s_n$ in terms of skew Schur
functions. This brings us to our main result.

\begin{theorem}\label{thm:skewpieri}
  For any skew shape $\la/\mu$ and positive integer $n$, we have
  \begin{equation}\label{equ:skewpieri}
    s_{\la/\mu} s_n = s_{\la/\mu} h_n  = \sum_{k=0}^n (-1)^k
    \sum_{\substack{\lp/\la \ (n-k)\text{-}\mathrm{hor.\ strip} \\
        \mu/\mm \ k \text{-} \mathrm{vert.\ strip}}} s_{\lp/\mm}\ ,  
  \end{equation}
  where the second sum is over all partitions $\lp$ and $\mm$ such that
  $\lp/\la$ is a horizontal strip of size $n-k$ and $\mu/\mm$ is a
  vertical strip of size $k$.
\end{theorem}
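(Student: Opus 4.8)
The plan is to give a weight-preserving, sign-reversing involution that interprets both sides of \eqref{equ:skewpieri} combinatorially and extends the insertion proof of the classical rule. First I would read the left-hand side as enumerating pairs $(S,w)$, where $S$ is an \SSYT\ of shape $\la/\mu$ and $w = w_1 \le \cdots \le w_n$ is a weakly increasing word of length $n$ (equivalently a one-row \SSYT, which is what $h_n$ counts), each pair weighted by the monomial $x^S x^w$. The right-hand side I would read as a signed set: an object is a triple $(U,\lp,\mm)$ in which $\lp/\la$ is a horizontal strip of size $n-k$, $\mu/\mm$ is a vertical strip of size $k$, and $U$ is an \SSYT\ of shape $\lp/\mm$, carrying sign $(-1)^k$ and weight $x^U$. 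Since the left-hand side has nonnegative coefficients while the right-hand side is signed, it suffices to construct a weight-preserving involution $\iota$ on these triples that reverses the sign on every non-fixed point, together with a weight-preserving bijection between the fixed points of $\iota$ and the pairs $(S,w)$.

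The bijection onto the fixed points should be the direct skew analogue of the classical insertion proof. Using the outer (external) row-insertion of Section~\ref{sec:insertion}, I would insert $w_1, \ldots, w_n$ in turn into $S$; because $w$ is weakly increasing, the successive bumping paths move weakly rightward and terminate on the outer boundary, so the $n$ new cells form a horizontal strip and produce a triple with $k=0$, $\mm=\mu$, and $\lp/\la$ an $n$-horizontal strip. Reverse external insertion, peeling off the cells of $\lp/\la$ from right to left, recovers $(S,w)$. This should give a bijection between the pairs $(S,w)$ and exactly those $k=0$ triples all of whose reverse bumping paths \begood\ to the outer boundary, which I expect to be precisely the fixed points of $\iota$.

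The involution itself is where the signs enter, and it is the heart of the argument. The point is that when one reverse-inserts the outer strip of a general triple, a bumping path need not \begood: it may instead descend into the inner region and arrive at a corner of $\mu$. This is exactly the situation governed by internal insertion in Section~\ref{sec:insertion}, which vacates an inner corner of $\mu$ and thereby enlarges the vertical strip $\mu/\mm$. I would define $\iota$ by locating the first bumping path (in the reverse-insertion order) that fails to \begood, and toggling it between an external insertion that adds a cell to the outer boundary and an internal insertion that removes a cell from $\mu$. Each toggle changes $k$ by exactly one, hence reverses the sign, while leaving the entries, and so the monomial $x^U$, unchanged; triples all of whose paths \begood\ have no such path to toggle and are the fixed points.

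The main obstacle will be making this toggle genuinely well defined and involutive. I expect the delicate points to be: verifying that after the toggle the outer cells still form a horizontal strip and the removed inner cells still form a vertical strip, so that the result is again a valid triple; checking that the construction is weight-preserving and that applying it twice returns the original triple; and, most subtly, confirming that the canonical choice of ``first non-exiting path,'' together with the interaction of a bumping path with the inner corners of $\mu$, is compatible with $\iota$ so that the fixed points are exactly the configurations produced by pure external insertion. These are the skew complications absent from the classical setting, where there is no inner boundary and no sign, and controlling them is what will require the careful properties of the Sagan--Stanley insertion to be developed in Section~\ref{sec:insertion}.
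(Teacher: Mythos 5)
Your plan coincides with the paper's: expand both sides into SSYTs, identify the pairs $(S,w)$ with fillings of $(\la/\mu)*(n)$, and cancel the signed terms with an insertion-based involution whose fixed points are the $k=0$ tableaux all of whose reverse row insertions exit at the bottom. The fixed-point bijection, the weight preservation, and the sign bookkeeping are all as in the paper.

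The genuine gap is the involution itself, which you reduce to ``toggling'' the first reverse bumping path that fails to reach the outer boundary; as stated this is not well defined and would not square to the identity. Two concrete problems. First, the two directions of the toggle are asymmetric: in the downward direction one reverse-inserts the cells of $\lp/\la$ from the right until some final entry lands at an inner corner, and then re-inserts the entries that fell out; to invert this one cannot simply internally insert the newly exposed cell of $\mu/\mm$, because the re-inserted entries may have been deflected by the downward path. The paper's upward slide (Definition~\ref{def:upward}) must first reverse-insert exactly those outer cells whose reverse bumping paths stay weakly right of the prospective internal-insertion path, then insert internally, then re-insert; showing that this recovers the original tableau is the bulk of the proof of Theorem~\ref{thm:involution} and rests on the bumping-path comparisons of Lemma~\ref{lem:horizontal}. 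Second, when $k\geq 1$ a tableau typically admits both a downward move (ejecting a further cell of $\mu$) and an upward move (restoring one), so ``first non-exiting path'' does not decide which to apply; the paper's criterion is whether the downward path's bottom cell lies strictly below the bottom cell of $\mu/\mm$ (the ``exits right'' condition of Section~\ref{sec:involution}), and one must check that this condition flips under the map. You correctly flag these as the delicate points, but they are the content of the theorem rather than routine verifications, so the proposal as written establishes the framework but not the result.
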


Observe that when $\mu=\emptyset$, Theorem~\ref{thm:skewpieri}
specializes to Theorem~\ref{thm:pieri}.  Again, we can apply the
$\omega$ transformation to obtain the dual version of the skew Pieri
rule.

\begin{corollary}  
  For any skew shape $\la/\mu$ and any positive integer $k$, we have
  \[
  s_{\la/\mu} s_{1^n} = s_{\la/\mu} e_n = \sum_{k=0}^n (-1)^k
  \sum_{\substack{\lp/\la \ (n-k)\text{-} \mathrm{vert.\ strip} \\
        \mu/\mm \ k \text{-} \mathrm{hor.\ strip}}} s_{\lp/\mm}, 
  \]
  where the sum is over all partitions $\lp$ and $\mm$ such that
  $\lp/\la$ is a vertical strip of size $n-k$ and $\mu/\mm$ is a
  horizontal strip of size $k$.
\end{corollary}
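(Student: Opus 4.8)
The plan is to derive the Corollary directly from the skew Pieri rule (Theorem~\ref{thm:skewpieri}) by applying the involution $\omega$, as the paragraph preceding the statement anticipates. Since $\omega$ is an algebra homomorphism on $\Lambda$, it sends the product $s_{\la/\mu}s_n$ to a product, and since $\omega(h_n)=e_n$ together with $s_n=h_n$, $s_{1^n}=e_n$, it converts the single-row factor into the single-column factor. The one ingredient I would invoke beyond what the excerpt states explicitly is that $\omega$ acts on skew Schur functions by transposing the skew shape, namely $\omega(s_{\sigma/\tau})=s_{\sigma^t/\tau^t}$; this is a standard extension of $\omega(s_\la)=s_{\la^t}$ from straight to skew shapes, and follows, for instance, from the transpose symmetry of Littlewood--Richardson coefficients combined with $s_{\sigma/\tau}=\sum_\nu c^\sigma_{\tau\nu}s_\nu$.

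First I would apply Theorem~\ref{thm:skewpieri} not to $\la/\mu$ but to the transposed skew shape $\la^t/\mu^t$, obtaining
\[
s_{\la^t/\mu^t}\,h_n = \sum_{k=0}^n (-1)^k \sum_{\substack{\nu/\la^t \ (n-k)\text{-}\mathrm{hor.\ strip} \\ \mu^t/\rho \ k\text{-}\mathrm{vert.\ strip}}} s_{\nu/\rho}.
\]
Then I would apply $\omega$ to both sides. On the left, multiplicativity gives $\omega(s_{\la^t/\mu^t}\,h_n)=s_{(\la^t)^t/(\mu^t)^t}\,e_n=s_{\la/\mu}\,e_n=s_{\la/\mu}\,s_{1^n}$, which is precisely the left-hand side of the Corollary. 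On the right, $\omega$ replaces each $s_{\nu/\rho}$ by $s_{\nu^t/\rho^t}$ while leaving the integer signs $(-1)^k$ untouched.

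The final step is a change of variables. Setting $\lp=\nu^t$ and $\mm=\rho^t$, and using that transposition preserves containment and exchanges horizontal and vertical strips of a given size---immediate from the definitions in Section~\ref{sec:prelims}, since ``no two cells in the same column'' becomes ``no two cells in the same row'' under transpose, and the number of cells is unchanged---the condition that $\nu/\la^t$ is an $(n-k)$-horizontal strip becomes the condition that $\lp/\la$ is an $(n-k)$-vertical strip, and the condition that $\mu^t/\rho$ is a $k$-vertical strip becomes the condition that $\mu/\mm$ is a $k$-horizontal strip. As $\nu$ and $\rho$ range over the partitions indexing the inner sum, $\lp$ and $\mm$ range over exactly the partitions indexing the sum in the Corollary, so the right-hand side collapses to the claimed expression.

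I do not expect a genuine obstacle, as the statement is formally dual to Theorem~\ref{thm:skewpieri}. The only point demanding care is the identity $\omega(s_{\sigma/\tau})=s_{\sigma^t/\tau^t}$, since the excerpt records the behavior of $\omega$ only on straight-shape Schur functions; once that identity is granted, the remainder of the argument is bookkeeping under the transpose involution.
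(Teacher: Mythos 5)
Your proposal is correct and follows essentially the same route as the paper, which obtains the corollary by "applying the $\omega$ transformation" to Theorem~\ref{thm:skewpieri}; you have merely spelled out the standard facts that $\omega(s_{\sigma/\tau})=s_{\sigma^t/\tau^t}$ and that transposition exchanges horizontal and vertical strips. No gaps.
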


\begin{example}\label{exa:skewpieri}
  A direct application of Theorem~\ref{thm:skewpieri} gives
  \begin{align*}
    s_{322/11} s_2 &= 
    s_{3222/11} + s_{3321/11} + s_{4221/11} +
   s_{432/11} + s_{522/11} \\ 
       & \qquad - s_{3221/1} - s_{332/1} -s_{422/1} + s_{322},
  \end{align*}
  as represented diagrammatically in
  Figure~\ref{fig:skewpieriexample}. 
\end{example}

\begin{figure}[htbp]
  \begin{center}
    \[
    \scalebox{.7}{\includegraphics{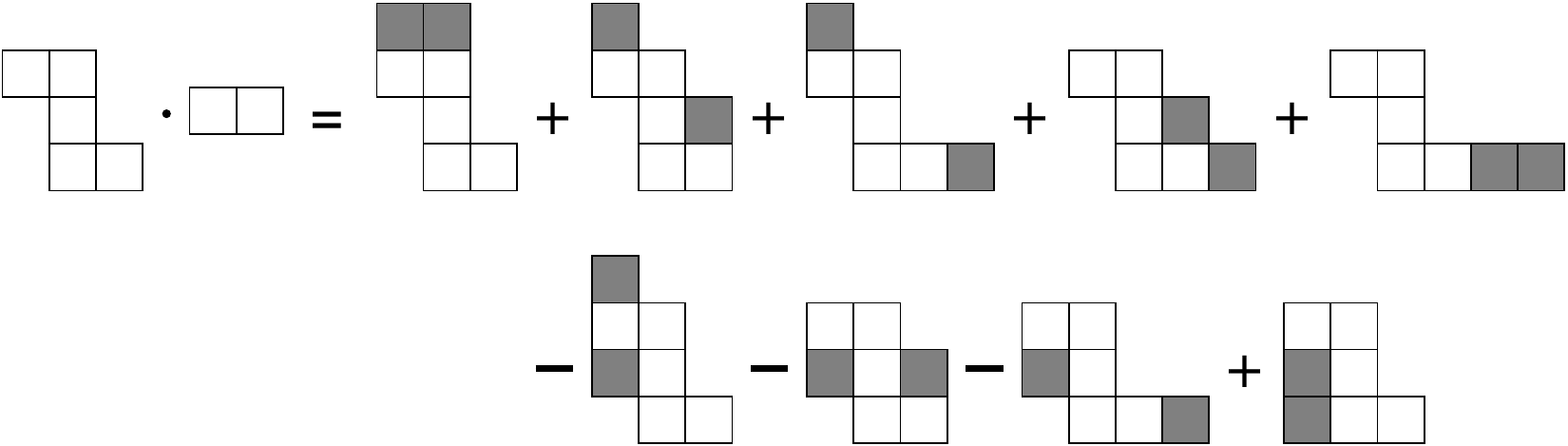}}
    \]
    \caption{The expansion of $s_{322/11} s_2$
    by the skew Pieri rule.}
    \label{fig:skewpieriexample}
  \end{center}
\end{figure}

As \eqref{equ:skewpieri} contains negative signs, our approach to
proving Theorem~\ref{thm:skewpieri} will be to construct a
sign-reversing involution on \SSYTs\ of shapes of the form $\lp/\mm$.
We will then provide a bijection between \SSYTs\ of shape $\lp/\mm$
that are fixed under the involution and \SSYTs\ of shape $(\la/\mu) *
(n)$.  The result then follows from the fact that $s_{(\la/\mu)*(n)} = s_{\la/\mu} s_n$.

\section{Row insertion}\label{sec:insertion}

In order to describe our sign-reversing involution, we will need the
Robinson--Schensted--Knuth (RSK) row insertion algorithm on \SSYTs\
\cite{Rob1938,Sch1961,Knu70}. For a thorough treatment of this
algorithm along with many applications, we recommend \cite{ECII}.  In
fact, we will use an analogue of the algorithm for \SSYTs\ of
\emph{skew} shape from \cite{SaSt90}.  There, row insertion comes in
two forms, external and internal row insertion.  External row
insertion, which we now define, is just like the classical RSK
insertion.

\begin{definition}\label{def:extern}
  Let $T$ be an \SSYT\ of arbitrary skew shape and choose a positive
  integer $k$. Define the \emph{external row insertion of $k$ into
    $T$}, denoted $T \extern_0 k$, as follows: if $k$ is weakly larger
  than all entries in row $1$ of $T$, then add $k$ to the right end of
 the row and terminate the process. Otherwise, find the leftmost
  cell in row $1$ of $T$ whose entry, say $k'$, is greater than
  $k$. Replace this entry by $k$ and then row insert $k'$ into $T$ at
  row $2$ using the procedure just described.  Repeat the process
  until some entry comes to rest at the right end of a row.
\end{definition}

\begin{example}\label{exa:extern}
  Let $\la/\mu=7541/32$ and $\lp/\mm=7542/31$ so that the outlined
  entries below are those in $\la/\mu$.  The result of externally row
  inserting a 2 is shown below, with changed cells circled.
\begin{equation}\label{equ:extern}
\setlength{\unitlength}{4.5mm}
\begin{picture}(7,1.5)(0,2.5)
\put(0,2){\line(0,1){2}}
\put(1,3){\line(0,1){1}}
\put(2,1){\line(0,1){1}}
\put(3,0){\line(0,1){1}}
\put(4,2){\line(0,1){1}}
\put(5,1){\line(0,1){1}}
\put(7,0){\line(0,1){1}}
\put(0,4){\line(1,0){1}}
\put(0,2){\line(1,0){2}}
\put(1,3){\line(1,0){3}}
\put(2,1){\line(1,0){1}}
\put(3,0){\line(1,0){4}}
\put(4,2){\line(1,0){1}}
\put(5,1){\line(1,0){2}}
\put(0.35,3.25){4}
\put(1.35,3.25){5}
\put(2.35,3.25){\hcirc{\ }}
\put(0.35,2.25){2}
\put(1.35,2.25){2}
\put(2.35,2.25){\hcirc{7}}
\put(3.35,2.25){7}
\put(1.35,1.25){1}
\put(2.35,1.25){2}
\put(3.35,1.25){3}
\put(4.35,1.25){\hcirc{4}}
\put(3.35,0.25){2}
\put(4.35,0.25){2}
\put(5.35,0.25){\hcirc{3}}
\put(6.35,0.25){6}
\end{picture}
\extern_0 2 \qquad = \qquad
\begin{picture}(7,1.5)(0,2.5)
\put(0,2){\line(0,1){2}}
\put(1,3){\line(0,1){1}}
\put(2,1){\line(0,1){1}}
\put(3,0){\line(0,1){1}}
\put(4,2){\line(0,1){1}}
\put(5,1){\line(0,1){1}}
\put(7,0){\line(0,1){1}}
\put(0,4){\line(1,0){1}}
\put(0,2){\line(1,0){2}}
\put(1,3){\line(1,0){3}}
\put(2,1){\line(1,0){1}}
\put(3,0){\line(1,0){4}}
\put(4,2){\line(1,0){1}}
\put(5,1){\line(1,0){2}}
\put(0.35,3.25){4}
\put(1.35,3.25){5}
\put(2.35,3.25){\ucirc{7}}
\put(0.35,2.25){2}
\put(1.35,2.25){2}
\put(2.35,2.25){\ucirc{4}}
\put(3.35,2.25){7}
\put(1.35,1.25){1}
\put(2.35,1.25){2}
\put(3.35,1.25){3}
\put(4.35,1.25){\ucirc{3}}
\put(3.35,0.25){2}
\put(4.35,0.25){2}
\put(5.35,0.25){\ucirc{2}}
\put(6.35,0.25){6}
\end{picture} 
\end{equation}
\vspace{1.5\unitlength}
\end{example}

An \emph{inside corner} (resp.\ \emph{outside corner}) of an SSYT $T$
is a cell that has no cell of $T$ immediately below or to its left
(resp.\ above or to its right).  Therefore, inside and outside corners
are those individual cells whose removal from $T$ still yields an SSYT
of skew shape.

\begin{definition}\label{def:intern}
  Let $T$ be an \SSYT\ of arbitrary skew shape and let $T$ have an
  inside corner in row $r$ with entry $k$.  Define the \emph{internal
    row insertion of $k$ from row $r$ into $T$}, denoted $T \intern_r
  k$, as the removal of $k$ from row $r$ and its insertion, using the
  rules for external row insertion, into row $r+1$.  The insertion
  proceeds until some entry comes to rest at the right end of a row.
\end{definition}
 
We could regard external insertions as internal insertions from row $0$,
explaining our notation.  We will simply write $T \rowinsert k$
when specifying the type or row of the insertion
is unnecessarily cumbersome.

\begin{example} Taking $T$ as the SSYT on the right
  in~\eqref{equ:extern}, the result of internally row inserting the 1
  from row 2 is shown below.
\begin{equation}\label{equ:intern}
\setlength{\unitlength}{4.5mm}
\begin{picture}(7,2.5)(0,2.5)
\put(0,2){\line(0,1){2}}
\put(1,3){\line(0,1){1}}
\put(2,1){\line(0,1){1}}
\put(3,0){\line(0,1){1}}
\put(4,2){\line(0,1){1}}
\put(5,1){\line(0,1){1}}
\put(7,0){\line(0,1){1}}
\put(0,4){\line(1,0){1}}
\put(0,2){\line(1,0){2}}
\put(1,3){\line(1,0){3}}
\put(2,1){\line(1,0){1}}
\put(3,0){\line(1,0){4}}
\put(4,2){\line(1,0){1}}
\put(5,1){\line(1,0){2}}
\put(0.35,4.25){\hcirc{\ }}
\put(0.35,3.25){\hcirc{4}}
\put(1.35,3.25){5}
\put(2.35,3.25){7}
\put(0.35,2.25){\hcirc{2}}
\put(1.35,2.25){2}
\put(2.35,2.25){4}
\put(3.35,2.25){7}
\put(1.35,1.25){\hcirc{1}}
\put(2.35,1.25){2}
\put(3.35,1.25){3}
\put(4.35,1.25){3}
\put(3.35,0.25){2}
\put(4.35,0.25){2}
\put(5.35,0.25){2}
\put(6.35,0.25){6}
\end{picture}
\intern_2 1 \qquad = \qquad
\begin{picture}(7,2.5)(0,2.5)
\put(0,2){\line(0,1){2}}
\put(1,3){\line(0,1){1}}
\put(2,1){\line(0,1){1}}
\put(3,0){\line(0,1){1}}
\put(4,2){\line(0,1){1}}
\put(5,1){\line(0,1){1}}
\put(7,0){\line(0,1){1}}
\put(0,4){\line(1,0){1}}
\put(0,2){\line(1,0){2}}
\put(1,3){\line(1,0){3}}
\put(2,1){\line(1,0){1}}
\put(3,0){\line(1,0){4}}
\put(4,2){\line(1,0){1}}
\put(5,1){\line(1,0){2}}
\put(0.35,4.25){\ucirc{4}}
\put(1.35,3.25){5}
\put(2.35,3.25){7}
\put(0.35,3.25){\ucirc{2}}
\put(1.35,2.25){2}
\put(2.35,2.25){4}
\put(3.35,2.25){7}
\put(0.35,2.25){\ucirc{1}}
\put(1.35,1.25){\ucirc{\ }}
\put(2.35,1.25){2}
\put(3.35,1.25){3}
\put(4.35,1.25){3}
\put(3.35,0.25){2}
\put(4.35,0.25){2}
\put(5.35,0.25){2}
\put(6.35,0.25){6}
\end{picture}
\end{equation}
\vspace{2\unitlength}
\end{example}

For both types of insertion, we must be a little careful when
inserting an entry into an empty row, say row $i$: in this case $\la_i
= \mu_i$ and the entry must be placed in column $\la_i+1$.

Note that an internal insertion results in the same multiset of
entries while an external insertion adds an entry. It is not
difficult to check that either operation results in an SSYT.

We will also need to invert row insertions, again for skew
shapes and following~\cite{SaSt90}.

\begin{definition}\label{def:deletion}
  Let $T$ be an \SSYT\ of arbitrary skew shape and choose an outside
  corner $c$ of $T$, say with entry $k$. Define the \emph{reverse row
    insertion of $c$ from $T$}, denoted $T \rightarrow c$, by deleting
  $c$ from $T$ and reverse inserting $k$ into the row below, say row
  $r$, as follows: if $r=0$, then the procedure
  terminates.  Otherwise, if $k$ is weakly smaller than all entries in row $r$,
  then place $k$ at the left end of row $r$ and terminate the
  procedure.  Otherwise, find the rightmost cell in row $r$ whose entry, say $k'$,
  is less than $k$. Replace this entry by $k$ and then reverse row
  insert $k'$ into row $r-1$ using the procedure just described.
\end{definition}

\begin{example}
  In~\eqref{equ:extern}, reverse row insertion of the cell containing
  the circled 7 from the \SSYT\ $T$ on the right results in the
  \SSYT\ on the left, and similarly in~\eqref{equ:intern} for the
  circled 4.
\end{example}

As with row insertion, it follows from the definition that the
resulting array will again be an \SSYT.  Observe that the first type
of termination mentioned in Definition~\ref{def:deletion} corresponds
to reverse external row insertion, and we then say that \emph{$k$
  lands in row $0$}.  The second type of termination corresponds to
reverse internal row insertion, and we then say that \emph{$k$ lands
  in row $r$}.  In both cases, we will call the entry $k$ left at the
end of the procedure the \emph{final entry of $T \rightarrow c$}.  The
following lemma, which follows immediately from
Definitions~\ref{def:extern}, \ref{def:intern} and \ref{def:deletion},
formalizes the bijectivity of row and reverse row insertion.

\begin{lemma}\label{lem:inverse}
  Let $T$ be an \SSYT\ of skew shape.
  \begin{enumerate}
  \renewcommand{\theenumi}{\alph{enumi}}
  \item If $S$ is the result of $T \rowinsert k$ for some positive
    integer $k$, then $S \rightarrow c$ results in $T$, where $c$ is
    the unique non-empty cell of $S$ that is empty in $T$.
  \item If $S$ is the result of $T \rightarrow c$ for some removable
    cell $c$ of $T$ and the final entry $k$ of $T \rightarrow c$ lands
    in row $r \geq 0$, then $S \rowinsert_r k$ results in $T$.
  \end{enumerate}  
\end{lemma}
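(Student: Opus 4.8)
The plan is to reduce both statements to a single \emph{local inversion} property holding at each individual row, and then chain these local inversions along the bumping path by induction. Both $T \rowinsert k$ and $T \rightarrow c$ proceed one row at a time, each step either terminating or passing a single bumped entry to an adjacent row, so it suffices to understand one such step and to check that a forward step and the corresponding reverse step undo one another.

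The key computation is the following. Suppose a forward step inserts a value $v$ into a nonempty row $i$ without terminating there; by Definition~\ref{def:extern} it replaces the leftmost entry $v'$ with $v' > v$ by $v$ and sends $v'$ up into row $i+1$. Since the row was weakly increasing beforehand, every entry strictly to the left of the bumped cell is $\le v < v'$ and every entry strictly to its right is $\ge v'$; hence after the replacement the bumped cell is exactly the \emph{rightmost} cell of row $i$ whose entry is strictly less than $v'$. A reverse step that brings $v'$ down into row $i$ therefore, by Definition~\ref{def:deletion}, selects precisely this cell, restores $v'$, and sends $v$ down into row $i-1$, returning the row to its original state. The terminating cases invert each other in the same manner: a forward step resting $v$ at the right end of row $i$ (because $v$ is weakly larger than every entry) is undone by a reverse step that brings $v$ down, finds it weakly smaller than every remaining entry, and rests it at the left end, i.e.\ lands in row $i$; and symmetrically.

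Granting this local inversion, part~(a) follows by tracing the forward insertion $S = T \rowinsert k$ from its starting row up to the terminating cell $c$ and then applying $S \rightarrow c$: each reverse step undoes the corresponding forward step, so the two paths coincide cellwise and $S \rightarrow c$ reproduces $T$. Uniqueness of the cell $c$ that is nonempty in $S$ but empty in $T$ holds because the bumping path only overwrites existing entries except at its terminus, where a single new cell is created. Part~(b) is the same argument read in the opposite direction: starting from $S = T \rightarrow c$, each forward step of $S \rowinsert_r k$ (external when $r=0$, internal when $r \ge 1$) retraces the reverse path upward and restores $T$.

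The main point requiring care is the bookkeeping at the two ends of the path, where the two flavors of termination (external versus internal) interact with the edge case of an empty row. For part~(b) one must verify that when the final entry $k$ lands at the left end of a row $r \ge 1$, that cell is genuinely an \emph{inside corner} of $S$, so that the internal insertion $S \rowinsert_r k$ is defined and begins exactly where the reverse insertion stopped; this is precisely where the column-strictness of an \SSYT\ and the convention that an entry inserted into an empty row $i$ occupies column $\la_i + 1$ must be invoked. Once these boundary cases are checked, the inductive chaining of the local inversion is routine.
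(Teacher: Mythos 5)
Your argument is correct and is precisely the verification the paper omits: the lemma is asserted there to follow immediately from Definitions~\ref{def:extern}, \ref{def:intern} and~\ref{def:deletion}, and your row-by-row local inversion, chained along the bumping path with the boundary checks you describe, is the intended proof. The only wrinkle is the sentence pairing the two terminations: the forward insertion's rest at the right end of a row is undone by the \emph{initial deletion} of $c$ in the reverse insertion, whereas it is the forward insertion's \emph{starting} move (external entry into row $1$, or removal of an inside corner from row $r$) that is matched by the reverse insertion's terminal landing in row $0$ or at the left end of row $r$; since your closing paragraph treats exactly these two ends of the path correctly, nothing essential is lost.
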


For both row insertion and reverse row insertion, we will often want
to track the cells affected by the procedure. Therefore define the
\emph{bumping path of the row insertion $T \rowinsert k$} (resp.\ the
\emph{reverse bumping path of a reverse row insertion $T \rightarrow
  c$}) to be the set of cells in $T$, as well as those empty cells, where the
entries differ from the corresponding entries in $T \rowinsert k$
(resp.\ $T \rightarrow c$).  The cells of the bumping paths for row
insertion and reverse row insertion are circled in~\eqref{equ:extern}
and~\eqref{equ:intern}. Note that the fact that the bumping path and
reverse bumping path are equal in each of these examples is a
consequence of Lemma~\ref{lem:inverse}.

It is easy to see that the bumping paths always move weakly right from
top to bottom in the case of column-strict tableaux. The following
bumping lemma will play a crucial role in defining our sign-reversing
involution and in proving its relevant properties.

\begin{lemma}\label{lem:horizontal} 
  Let $T$ be an \SSYT\ of skew shape and let $k,k'$ be positive
  integers. Let $B$ be the bumping path of $T \rowinsert k$ and let
  $B'$ be the bumping path of $(T \rowinsert k) \rowinsert k'$.
  \begin{enumerate}
    \renewcommand{\theenumi}{\alph{enumi}}
  \item If $B$ is strictly left of $B'$ in any row $r$, then $B$ is
    strictly left of $B'$ in every row they both occupy.  Moreover,
    the top cells of $B$ and $B'$ form a horizontal strip.
  \item If both row insertions are external, then $B$ is strictly left
    of $B'$ in every row they both occupy if and only if $k \leq k'$.
  \item 
 Suppose $C'$ is the reverse bumping path of $T \rightarrow c'$ with final entry $k'$ and
    $C$ is the reverse bumping path of $(T \rightarrow c') \rightarrow
    c$ with final entry $k$.  If $c$ is strictly left of $c'$, then $C$ is strictly left of
    $C'$ in every row they both occupy.  
    If, in addition,  both reverse
    row insertions land in row 0, then $k \leq k'$.
  \end{enumerate}
  \end{lemma}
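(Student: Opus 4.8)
The plan is to reduce everything to a single \emph{row-by-row propagation} analysis comparing, for two successive insertions, the columns occupied by the two bumping paths in consecutive rows. Write the first bumping path $B$ as occupying column $b_r$ in each row $r$ it meets, and let $U = T \rowinsert k$ denote the intermediate tableau, so that $B'$ is the bumping path of $U \rowinsert k'$ and occupies column $b'_r$ in each row it meets. Since $U$ differs from $T$ only in the single cell of each row lying on $B$, and since each row of an \SSYT\ is weakly increasing, the value bumped upward out of row $r$ by each insertion is simply the entry of the \emph{unmodified} tableau in the cell where that insertion settles. This lets me prove two complementary implications, valid for every pair of rows $r, r+1$ that both paths occupy: first, if $b_r < b'_r$ then the first insertion bumps a value weakly smaller than the second, forcing $b_{r+1} < b'_{r+1}$; and second, if $b'_r \le b_r$ then the first insertion bumps a value weakly larger, forcing $b'_{r+1} \le b_{r+1}$. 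The one delicate point is to rule out a tie of bumped values producing a crossing in the second implication; this is excluded because an equality of bumped values together with $b'_r < b_r$ would contradict the first insertion having settled at the \emph{leftmost} sufficiently large entry of row $r$.

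With these two implications in hand, part (a) is immediate. At each common row exactly one of the alternatives $b_r < b'_r$ or $b'_r \le b_r$ holds, and each alternative is preserved as we pass upward from row $r$ to row $r+1$; hence whichever alternative holds at the lowest common row persists throughout. This gives the dichotomy that either $B$ is strictly left of $B'$ in every common row, or $B'$ is weakly left of $B$ in every common row. In particular, strict-leftness in even one row forces the first case, proving the first assertion. For the \emph{moreover} statement I will analyze the two \emph{terminal} cells, namely the new boxes at the tops of $B$ and $B'$, using the observation that the top cell of a path sits at the right end of its row in the relevant intermediate tableau: in the strictly-left case this right-boundary constraint forces the path terminating in the higher row to do so strictly to the right, so the two top cells lie in distinct columns and thus form a horizontal strip.

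Part (b) then follows by identifying the alternative at the lowest common row when both insertions are external, namely row $1$. Inserting $k$ settles at the leftmost entry of row $1$ exceeding $k$; inserting $k' \ge k$ into $U$ must then settle strictly to the right, since every entry of $U$ at or left of that cell is at most $k \le k'$, giving $b_1 < b'_1$; conversely $k > k'$ makes the cell holding $k$ itself a candidate, forcing $b'_1 \le b_1$. By the dichotomy this pins strict-leftness in every common row to the single condition $k \le k'$.

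Finally, part (c) I will deduce from (a) and (b) by reversing time. By Lemma~\ref{lem:inverse}, writing $V = T \rightarrow c'$ and $W = V \rightarrow c$, the reverse insertions are inverse to the forward insertions $W \rowinsert k$ producing $V$ and then $V \rowinsert k'$ producing $T$, and the reverse bumping paths $C, C'$ coincide with the corresponding forward bumping paths, with $c, c'$ as their top cells. The hypothesis that $c$ lies strictly left of $c'$ is thus a statement about terminal cells, and the terminal-cell analysis from (a) shows that the complementary, weakly-left case would instead place $c'$ weakly left of $c$; so the hypothesis forces the strictly-left case, giving $C$ strictly left of $C'$ in every common row. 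If in addition both reverse insertions land in row $0$, then both forward insertions are external, and part (b) applied to $W, k, k'$ converts the now-established strict-leftness into exactly $k \le k'$. The main obstacle throughout is the careful bookkeeping of the terminal cells and of the left-hand row boundaries peculiar to skew shapes, which is precisely where the horizontal-strip claim of (a) and the final inequality of (c) reside.
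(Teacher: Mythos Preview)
Your approach is correct but organized differently from the paper's proof in two notable places.  For (a), the paper proves upward propagation of the strictly-left relation and then, for downward propagation, invokes Lemma~\ref{lem:inverse} to reinterpret the bumping paths as reverse bumping paths; you instead prove upward propagation of \emph{both} alternatives (strictly-left and weakly-left) and deduce the dichotomy from whichever holds at the lowest common row, which is arguably cleaner since it avoids reverse insertion at this stage.  For (c), the paper argues directly: in the row containing $c'$, the cell $c'$ is empty in $T\rightarrow c'$, so the reverse path $C$ (if it reaches that row) is forced strictly left of $c'$ there, and then (a) applies.  You instead run the contrapositive of your dichotomy, claiming that the weakly-left case forces $c'$ weakly left of $c$.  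That claim is true, but it is \emph{not} what you proved in the ``moreover'' part of (a), which treated only the strictly-left case; you need a parallel terminal-cell analysis for the weakly-left alternative (showing that then $s\ge r$ and hence the column of $c'$ is at most that of $c$), and you should state it explicitly rather than gesture at (a).  Also, your phrasing ``the path terminating in the higher row to do so strictly to the right'' has the direction reversed: in the strictly-left case one shows $s\le r$, and it is the top cell of $B'$ (the one in the \emph{lower} or equal row) that lies strictly to the right in column, by the partition constraint on the intermediate outer shape.  Once those two points are tightened, your argument goes through; the paper's route for (c) is simply shorter.
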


\begin{proof}
  For general $i$, let $B_i$ (resp.\ $B_i'$) be the cell of $B$
  (resp.\ $B'$) in row $i$, say with entry $b_i$ in $T$ (resp.\ $b_i'$ in $T \rowinsert k$).

  (a) Suppose $B_r$ is strictly left of $B_r'$ for some $r$ for which
  $B_{r+1}$ and $B_{r+1}'$ both exist.  Then $b_r$ will occupy the
  cell $B_{r+1}$ in $T \rowinsert k$, and since $b_r' \geq b_r$,
  $B_{r+1}'$ will be strictly right of $B_{r+1}$.  To show that
  $B_{r-1}$ is strictly left of $B_{r-1}'$ assuming both exist, let
  $c$ denote the cell that is empty in $T$ but non-empty in $T
  \rowinsert k$ and let $c'$ denote the cell that is empty in $T
  \rowinsert k$ but not in $(T \rowinsert k) \rowinsert k'$.  By
  Lemma~\ref{lem:inverse}(a), $B$ (resp.\ $B'$) is also the reverse
  bumping path of $(((T \rowinsert k) \rowinsert k') \rightarrow
  c')\rightarrow c$ (resp.\ $((T \rowinsert k) \rowinsert k')
  \rightarrow c'$).  We know that $b_{r-1}$ will occupy the cell $B_r$
  in $(T \rowinsert k) \rowinsert k'$ while $b_{r-1}'$ will occupy the
  cell $B'_r$, implying $b_{r-1} \leq b_{r-1}'$.  As a result,
  considering the reverse row insertions of $c'$ and then $c$, we
  deduce that $B_{r-1}$ is strictly left of $B_{r-1}'$.
  
  Now consider the top cells $B_r$ and $B_s'$ of $B$ and $B'$
  respectively.  Since $B_r$ is the top cell of $B$, we know that it
  is at the right end of row $r$.  When $s \geq r$, we know that
  $B_r'$ is strictly right of $B_r$.  But then $B_r'$ is empty in $T
  \rowinsert k$ and so we must have $s=r$.  We conclude that $s \leq
  r$ and the result follows.
  
  (b) If $k \leq k'$, then $B_1$ is strictly left of $B_1'$.  If $k >
  k'$, then $B_1$ is weakly right of $B_1'$.  The result now follows
  from (a).

  (c) Letting $T'$ denote $(T \rightarrow c') \rightarrow c$,
  Lemma~\ref{lem:inverse} implies that $C$ equals the bumping path $B$
  of $T' \rowinsert k$ while $C'$ equals the bumping path $B'$ of $(T'
  \rowinsert k)\rowinsert k'$.  Applying (a), it suffices to show that
  $C$ is strictly left of $C'$ in some row which they both occupy.
  Consider row $r$, the top row of $C'$.  Since $c$ is strictly left
  of $c'$, either $C$ and $C'$ have no rows in common, in which case
  the result is trivial, or else $C$ has a cell in row $r$. When we
  choose the elements of $C$, we have already performed the reverse
  row insertion $T \rightarrow c'$.  In particular, the cell $c'$ is
  empty.  Therefore, $C$ must be strictly left of $C'$ in row $r$, as
  required.  The second assertion now follows by applying (b) to these
  new $B$ and $B'$.
\end{proof}

To foreshadow the role of Lemma~\ref{lem:horizontal} in the following
section, we give a proof of the classical Pieri rule using this
result.

\begin{proof}[Proof of Theorem~\ref{thm:pieri}] 
  The formula is proved if we can give a bijection between \SSYTs\ of
  shape $\la * (n)$ and \SSYTs\ of shape $\lp$ such that $\lp/\la$ is
  a horizontal strip of size $n$. Let $k_1 \leq \cdots \leq k_n$ be
  entries of $(n)$ from left to right. Repeated applications of (a)
  and (b) of Lemma~\ref{lem:horizontal} ensure that row inserting
  these entries into an \SSYT\ of shape $\la$ will add a horizontal
  strip of size $n$ to $\la$. By Lemma~\ref{lem:inverse}, this
  establishes a bijection where the inverse map is given by reverse
  row inserting the cells of $\lp/\la$ from right to left.
\end{proof}

\section{A sign-reversing involution}\label{sec:involution}

Throughout this section, fix a skew shape $\la/\mu$. We will be
interested in \SSYTs\ of shape $\lp/\mm$, where we always assume that
$\lp/\la$ is a horizontal strip, $\mu/\mm$ is a vertical strip, and
$|\lp/\la| + |\mu/\mm| = n$.  Our goal is to construct a
sign-reversing involution on SSYTs whose shapes take the form
$\lp/\mm$, such that the fixed points are in bijection with SSYTs of
shape $(\la/\mu) * (n)$.

Our involution is reminiscent of the proof of the classical Pieri rule
given in Section~\ref{sec:insertion}. By Lemma~\ref{lem:inverse},
reverse row insertion gives a bijective correspondence provided we
record the final entry and its landing row.  Our strategy, then, is to
reverse row insert the cells of $\lp/\la$ from right to left,
recording the entries as we go. If at some stage we land in row $r
\geq 1$, we will then re-insert all the previous final entries. More
formally, we have the following definition of a \emph{downward slide
  of $T$}.

\begin{definition}\label{def:downward}
  Let $T$ be an \SSYT\ of shape $\lp/\mm$.  Define the \emph{downward
    slide of $T$}, denoted $\D(T)$, as follows: construct $T
  \rightarrow c_1$ where $c_1$ is the rightmost cell of $\lp/\la$, and
  let $k_1$ denote the final entry. If $k_1$ lands in row $0$, then
  continue with $c_2$ the second rightmost cell of $\lp/\la$ and $k_2$
  the final entry of $(T \rightarrow c_1) \rightarrow c_2$.  Continue
  until the first time $k_m$ lands in row $r \geq 1$ and set $m'=m-1$,
  or set $m = m' = |\lp/\la|$ if no such $k_m$ exists. Then $\D(T)$ is
  given by
    \[
    (\cdots(( (\cdots(T \rightarrow c_1) \rightarrow
    c_2\cdots) \rightarrow c_m ) \extern k_{m'} ) \cdots
    ) \extern k_{1}.
    \]
\end{definition}

\begin{example}\label{exa:downward}
  With $T$ shown on the left below, we exhibit the construction of
  $\D(T)$ in two steps.  We find that $m=4$ and the middle \SSYT\
  shows the result of $(((T \rightarrow c_1) \rightarrow c_2)
  \rightarrow c_3) \rightarrow c_4$.  The entries that land in row 0
  are recorded in the dashed box.  Then the \SSYT\ on the right is
  $\D(T)$.  The significance of the circles will be
  explained later.
\begin{equation}\label{equ:downward}
\setlength{\unitlength}{4.5mm}
\begin{picture}(7,1)(0,5)
\put(0,3){\line(0,1){2}}
\put(1,4){\line(0,1){1}}
\put(2,2){\line(0,1){1}}
\put(3,1){\line(0,1){1}}
\put(4,3){\line(0,1){1}}
\put(5,2){\line(0,1){1}}
\put(7,1){\line(0,1){1}}
\put(0,5){\line(1,0){1}}
\put(0,3){\line(1,0){2}}
\put(1,4){\line(1,0){3}}
\put(2,2){\line(1,0){1}}
\put(3,1){\line(1,0){4}}
\put(4,3){\line(1,0){1}}
\put(5,2){\line(1,0){2}}
\put(0.35,5.25){9}
\put(0.35,4.25){3}
\put(1.35,4.25){\ucirc{5}}
\put(2.35,4.25){7}
\put(3.35,4.25){7}
\put(0.35,3.25){2}
\put(1.35,3.25){\ucirc{2}}
\put(2.35,3.25){3}
\put(3.35,3.25){4}
\put(1.35,2.25){\ucirc{1}}
\put(2.35,2.25){2}
\put(3.35,2.25){2}
\put(4.35,2.25){3}
\put(5.35,2.25){6}
\put(2.35,1.25){\ucirc{\ }}
\put(3.35,1.25){1}
\put(4.35,1.25){2}
\put(5.35,1.25){2}
\put(6.35,1.25){5}
\end{picture}
\quad
\begin{picture}(10,1)(0,5)
\put(0,3){\line(0,1){2}}
\put(1,4){\line(0,1){1}}
\put(2,2){\line(0,1){1}}
\put(3,1){\line(0,1){1}}
\put(4,3){\line(0,1){1}}
\put(5,2){\line(0,1){1}}
\put(7,1){\line(0,1){1}}
\put(0,5){\line(1,0){1}}
\put(0,3){\line(1,0){2}}
\put(1,4){\line(1,0){3}}
\put(2,2){\line(1,0){1}}
\put(3,1){\line(1,0){4}}
\put(4,3){\line(1,0){1}}
\put(5,2){\line(1,0){2}}
\put(0.35,5.25){9}
\put(0.35,4.25){3}
\put(0.35,3.25){2}
\put(1.35,3.25){5}
\put(2.35,3.25){7}
\put(3.35,3.25){7}
\put(1.35,2.25){2}
\put(2.35,2.25){2}
\put(3.35,2.25){3}
\put(4.35,2.25){4}
\put(2.35,1.25){1}
\put(3.35,1.25){2}
\put(4.35,1.25){2}
\put(5.35,1.25){3}
\put(6.35,1.25){6}
\put(7,0){\dashbox{0.25}(3,1){}}
\put(7.35,0.25){1}
\put(8.35,0.25){2}
\put(9.35,0.25){5}
\end{picture}
\quad\!
\begin{picture}(7,1)(0,5)
\put(0,3){\line(0,1){2}}
\put(1,4){\line(0,1){1}}
\put(2,2){\line(0,1){1}}
\put(3,1){\line(0,1){1}}
\put(4,3){\line(0,1){1}}
\put(5,2){\line(0,1){1}}
\put(7,1){\line(0,1){1}}
\put(0,5){\line(1,0){1}}
\put(0,3){\line(1,0){2}}
\put(1,4){\line(1,0){3}}
\put(2,2){\line(1,0){1}}
\put(3,1){\line(1,0){4}}
\put(4,3){\line(1,0){1}}
\put(5,2){\line(1,0){2}}
\put(0.35,6.25){${\ }$}
\put(0.35,5.25){${9}$}
\put(0.35,4.25){3}
\put(1.35,4.25){${5}$}
\put(2.35,4.25){7}
\put(0.35,3.25){2}
\put(1.35,3.25){${3}$}
\put(2.35,3.25){4}
\put(3.35,3.25){7}
\put(1.35,2.25){${2}$}
\put(2.35,2.25){2}
\put(3.35,2.25){2}
\put(4.35,2.25){3}
\put(5.35,2.25){6}
\put(2.35,1.25){${1}$}
\put(3.35,1.25){1}
\put(4.35,1.25){2}
\put(5.35,1.25){2}
\put(6.35,1.25){5}
\end{picture}
\vspace{5\unitlength}
\end{equation}
Alternatively, if $T$ is the SSYT shown on the left below, we find
that $m=3$ and that all three final entries land in row 0.  Then
$m'=|\lp/\la|$ and Lemma~\ref{lem:inverse} ensures that $D(T) = T$.
Below in the middle, we have shown $((T \rightarrow c_1) \rightarrow
c_2) \rightarrow c_3$.  The position of the dashed box is intended to
be suggestive: together with the entries in the outlined shape, we see
that we have an SSYT of shape $(\la/\mu) * (n) = (653/21)*(3)$.
\[
\setlength{\unitlength}{4.5mm}
\begin{picture}(7,1)(0,4)
\put(0,3){\line(0,1){1}}
\put(1,2){\line(0,1){1}}
\put(2,1){\line(0,1){1}}
\put(3,3){\line(0,1){1}}
\put(5,2){\line(0,1){1}}
\put(6,1){\line(0,1){1}}
\put(0,3){\line(1,0){1}}
\put(0,4){\line(1,0){3}}
\put(1,2){\line(1,0){1}}
\put(2,1){\line(1,0){4}}
\put(3,3){\line(1,0){2}}
\put(5,2){\line(1,0){1}}
\put(0.35,3.25){2}
\put(0.35,4.25){5}
\put(2.35,3.25){6}
\put(1.35,2.25){1}
\put(2.35,2.25){3}
\put(3.35,2.25){3}
\put(1.35,3.25){4}
\put(2.35,1.25){1}
\put(3.35,1.25){1}
\put(4.35,2.25){3}
\put(5.35,2.25){7}
\put(4.35,1.25){2}
\put(5.35,1.25){3}
\put(6.35,1.25){3}
\end{picture}
\quad
\begin{picture}(9,1)(0,4)
\put(0,3){\line(0,1){1}}
\put(1,2){\line(0,1){1}}
\put(2,1){\line(0,1){1}}
\put(3,3){\line(0,1){1}}
\put(5,2){\line(0,1){1}}
\put(6,1){\line(0,1){1}}
\put(0,3){\line(1,0){1}}
\put(0,4){\line(1,0){3}}
\put(1,2){\line(1,0){1}}
\put(2,1){\line(1,0){4}}
\put(3,3){\line(1,0){2}}
\put(5,2){\line(1,0){1}}
\put(0.35,3.25){2}
\put(1.35,3.25){5}
\put(2.35,3.25){6}
\put(1.35,2.25){1}
\put(2.35,2.25){3}
\put(3.35,2.25){3}
\put(4.35,2.25){4}
\put(2.35,1.25){1}
\put(3.35,1.25){1}
\put(4.35,1.25){3}
\put(5.35,1.25){7}
\put(6,0){\dashbox{0.25}(3,1){}}
\put(6.35,0.25){2}
\put(7.35,0.25){3}
\put(8.35,0.25){3}
\end{picture}
\quad
\begin{picture}(7,1)(0,4)
\put(0,3){\line(0,1){1}}
\put(1,2){\line(0,1){1}}
\put(2,1){\line(0,1){1}}
\put(3,3){\line(0,1){1}}
\put(5,2){\line(0,1){1}}
\put(6,1){\line(0,1){1}}
\put(0,3){\line(1,0){1}}
\put(0,4){\line(1,0){3}}
\put(1,2){\line(1,0){1}}
\put(2,1){\line(1,0){4}}
\put(3,3){\line(1,0){2}}
\put(5,2){\line(1,0){1}}
\put(0.35,3.25){2}
\put(0.35,4.25){5}
\put(2.35,3.25){6}
\put(1.35,2.25){1}
\put(2.35,2.25){3}
\put(3.35,2.25){3}
\put(1.35,3.25){4}
\put(2.35,1.25){1}
\put(3.35,1.25){1}
\put(4.35,2.25){3}
\put(5.35,2.25){7}
\put(4.35,1.25){2}
\put(5.35,1.25){3}
\put(6.35,1.25){3}
\end{picture}
\]
\vspace{3\unitlength}
\end{example}

The final reverse bumping path in a downward slide will play an
important role in the sign-reversing involution. Therefore, with
notation as in Definition~\ref{def:downward}, if $m < |\lp/\la|$, then
we refer to the reverse bumping path of $k_m$ as the \emph{downward
  path of $T$}. The cells of the downward path of $T$ are circled
above. Say that the downward path of $T$ \emph{\isgood} if its bottom
cell (which may be empty) is strictly below the bottom cell $\mu/\mm$.
Our terminology is justified since one can show that the \isgood\
condition is equivalent to the bottom cell of the downward path being
weakly right of the bottom cell of $\mu/\mm$.  The importance of the
\isgood\ condition is revealed by the following result.

\begin{proposition}\label{prop:downshape}
  Suppose $T$ is an \SSYT\ of shape $\lp/\mm$ such that
  the downward path of $T$, if it exists, \isgood. Then $\D(T)$ is
  an \SSYT\ of shape $\la'/\mu'$, where $\la'/\la$
  (resp.\ $\mu/\mu'$) is a horizontal (resp.\ vertical) strip.
\end{proposition}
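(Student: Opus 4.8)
The SSYT property of $\D(T)$ is automatic, since every external and every reverse row insertion produces an SSYT; so the entire task is to identify the shape of $\D(T)$ and verify the two strip conditions. The plan is to split on whether the downward path exists. If it does not, then every final entry lands in row $0$, so $m'=m=|\lp/\la|$, and the external re-insertions invert the reverse insertions by Lemma~\ref{lem:inverse}(b); hence $\D(T)=T$, whose shape $\lp/\mm$ already satisfies the conclusion with $\la'=\lp$ and $\mu'=\mm$. So assume the downward path exists, $k_m$ lands in some row $r\geq 1$, and $m'=m-1$.

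My first step is to track the shape through the two phases of the construction. During the reverse insertions $T\rightarrow c_1\rightarrow\cdots\rightarrow c_m$, the first $m-1$ land in row $0$ and so merely delete from the outer shape the $m-1$ rightmost cells of the horizontal strip $\lp/\la$, leaving the inner shape $\mm$ untouched. Writing $\nu$ for the outer shape after all $m$ deletions, the set $\nu/\la$ is the complementary (leftmost) part of $\lp/\la$ and is therefore still a horizontal strip. The last reverse insertion $\rightarrow c_m$ lands in row $r\geq 1$, additionally placing an entry at the left end of row $r$, so the inner shape shrinks to $\mu'$ with $\mu'_r=\mm_r-1$ and $\mu'_i=\mm_i$ otherwise. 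The external re-insertions $\extern k_{m-1},\ldots,\extern k_1$ then only grow the outer shape from $\nu$ to the final outer shape $\la'$, leaving $\mu'$ fixed. Thus $\D(T)$ has shape $\la'/\mu'$, and it remains to verify the two strip conditions.

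For the inner strip I would use the \isgood\ hypothesis in its ``below'' form: the bottom cell of the downward path lies in row $r$ and is strictly below the bottom cell of $\mu/\mm$, which forces $\mu/\mm$ to have no cell in row $r$, i.e.\ $\mu_r=\mm_r$. Hence $\mu/\mu'$ is obtained from the vertical strip $\mu/\mm$ by adjoining a single cell in the previously empty row $r$, keeping all occupied rows distinct, so $\mu/\mu'$ remains a vertical strip.

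The heart of the argument, and the step I expect to be the main obstacle, is showing that $\la'/\la$ is a horizontal strip. First, Lemma~\ref{lem:horizontal}(c) gives $k_1\geq\cdots\geq k_{m-1}$ (each $c_i$ is strictly left of $c_{i-1}$ and lands in row $0$), so the re-insertions are performed in the weakly increasing order $k_{m-1}\leq\cdots\leq k_1$; by parts (a) and (b) of Lemma~\ref{lem:horizontal} their bumping paths move strictly rightward and their top cells form a horizontal strip $\la'/\nu$. The delicate point is that $\la'/\nu$ must combine with $\nu/\la$ into a single horizontal strip $\la'/\la$. To control this I would compare the re-insertions into $S_m:=T\rightarrow c_1\rightarrow\cdots\rightarrow c_m$ with the hypothetical re-insertions into $S_{m-1}:=T\rightarrow c_1\rightarrow\cdots\rightarrow c_{m-1}$: by Lemma~\ref{lem:inverse} the latter exactly recover $T$, returning the cells $c_{m-1},\ldots,c_1$ and reconstructing the genuine horizontal strip $\lp/\la$. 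Since $S_m=S_{m-1}\rightarrow c_m$ and, by Lemma~\ref{lem:horizontal}(c), the downward path (the reverse bumping path of $c_m$) lies strictly to the left of the reverse bumping paths of all $c_i$ with $i<m$, the modification from $S_{m-1}$ to $S_m$ happens strictly to the left of every forward re-insertion path; here the \isgood\ hypothesis in its ``weakly right'' form is what guarantees that this leftward perturbation is far enough down and to the right that it does not disturb where the re-inserted top cells come to rest. One then concludes that these top cells occupy columns interleaving correctly with those of $\nu/\la$, making $\la'/\la$ a horizontal strip. Pinning down precisely how the single reverse insertion $\rightarrow c_m$ shifts the subsequent forward bumping paths, and certifying the non-interference rigorously, is where the real work lies.
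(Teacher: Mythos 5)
Your setup and the easy parts track the paper's proof closely: the trivial case, the identification of the shape of $\D(T)$, the vertical-strip claim via the exits-right hypothesis, and the use of Lemma~\ref{lem:horizontal}(c) to order $k_{m-1}\leq\cdots\leq k_1$ together with parts (a) and (b) to see that the re-insertions add a horizontal strip among themselves are all correct. But the step you yourself flag as ``where the real work lies'' is precisely the content of the proposition, and it is left unproved. Worse, the mechanism you sketch for it --- that the exits-right hypothesis guarantees the reverse insertion $\rightarrow c_m$ ``does not disturb where the re-inserted top cells come to rest'' --- is not what actually happens. The reverse insertion along the downward path increases entries in cells lying strictly left of the reverse bumping paths of $c_1,\ldots,c_{m-1}$, and the forward bumping path of $k_{m-1}$ can perfectly well run into one of these changed cells; when it does, $k_{m-1}$ does \emph{not} return to $c_{m-1}$. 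So a non-interference argument cannot be completed, and the exits-right hypothesis in fact plays no role in this half of the proof (it is needed only for the vertical strip).

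What the paper proves instead is the weaker but sufficient statement that the cell added by inserting $k_{m-1}$ lies \emph{weakly} right of $c_m$, via a dichotomy: either the bumping path of $k_{m-1}$ avoids every cell altered along the downward path, in which case it coincides with the reverse bumping path of $c_{m-1}$ and terminates by restoring $c_{m-1}$; or it meets the downward path, in which case it merges with it and terminates at the now-vacant cell $c_m$ at the right end of its row. In either case the new cell is weakly right of $c_m$, hence strictly right of the unmoved cells of $\lp/\la$, and Lemma~\ref{lem:horizontal}(a),(b) then finishes the horizontal-strip claim. This dichotomy is the missing idea in your write-up; everything else you have is essentially the published argument.
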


\begin{proof}
  First observe that if $T$ has no downward path, then $\D(T) = T$ and
  clearly has the required shape. Otherwise, using the notation from
  Definition~\ref{def:downward}, for each $1 \leq i < m$, $c_i$ has a
  reverse bumping path leading down to the first row. Therefore by
  Lemma~\ref{lem:horizontal}(c), the reverse bumping path of $c_i$
  lies strictly right of that of and $c_{i+1}$, and $k_{i+1} \leq k_i$
  for $1 \leq i < m-1$. After the last reverse row insertion, which is
  along the downward path, a cell containing $k_m$ is added to the
  left end of a row strictly below the bottom cell of $\mu/\mm$.  Thus
  $\mu/\mu'$ is indeed a vertical strip. By (a) and (b) of
  Lemma~\ref{lem:horizontal}, since $k_{m-1} \leq \cdots \leq k_1$,
  reinserting these entries adds a horizontal strip, so it remains to
  show that the cell added when inserting $k_{m-1}$ lies strictly
  right of those cells of $\lp/\la$ that were not moved under the
  downward slide.  This will be the case if the cell added when
  inserting $k_{m-1}$ lies weakly right of $c_{m}$.  Indeed, the
  bumping path of $k_{m-1}$ will either be the reverse bumping path of
  $c_{m-1}$ or it will have been affected by the changes on the
  downward path and so will intersect the downward path.  In the
  former case the bumping path will end with the addition of
  $c_{m-1}$, while in the latter it ends with the addition of $c_m$,
  as required.
\end{proof}

Supposing $\D(S) = T$ with $T \neq S$, the next step is to invert the
downward slides for such $T$.  Since any such $T$ necessarily has $\mm
\neq \mu$, the idea is to internally row insert the bottom cell of
$\mu/\mm$.  However, before doing this we must reverse row insert
certain cells of $\lp/\la$, as in a downward slide. To describe which
cells to reverse insert, we define the \emph{upward path of $T$} to be
the bumping path that would result from internal row insertion of the
entry in the bottom cell of $\mu/\mm$.  Roughly, we will reverse row
insert anything that is weakly right of this upward path.

\begin{definition}\label{def:upward}
  Let $T$ be an \SSYT\ of shape $\lp/\mm$ such that $\mm \neq \mu$.
  Define the \emph{upward slide of $T$}, denoted $\U(T)$, as follows:
  construct $T \rightarrow c_1$ where $c_1$ is the rightmost cell of
  $\lp/\la$, and let $k_1$ denote its final entry and $B_1$ its
  bumping path.  If $B_1$ fails to stay weakly right of the upward
  path of $T$, then set $m=m'=0$.  Otherwise, consider $c_2$, the second
  rightmost cell of $\lp/\la$, and $k_2$, the final entry of $(T
  \rightarrow c_1) \rightarrow c_2$, and $B_2$, the corresponding
  bumping path. Continue until the last time $B_{m}$ stays weakly
  right of the upward path of $T$ or until no cell of $\lp/\la$
  remains.  Suppose that after the reverse row insertions, the bottom cell of
  $\mu/\mm$ is in row $r$ and has entry $k$.  
  Then $\U(T)$ is given by
  \begin{equation}\label{equ:upward}
  (\cdots((( (\cdots(T \rightarrow c_1) \rightarrow
  c_2\cdots) \rightarrow c_m) \intern_r k) \rowinsert
  k_{m'}) \cdots) \rowinsert k_{1} 
  \end{equation} 
where we set $m'=m$ if $k_m$ lands in row $0$, and $m'=m-1$ otherwise. 
\end{definition}

\begin{example}\label{exa:upward}
  Letting $T$ be the rightmost \SSYT\ of~\eqref{equ:downward}, the
  cells of the upward path of $T$ are circled below.  We determine
  $\U(T)$ in three steps.  We find that $m=3$ and the middle \SSYT\
  of~\eqref{equ:downward} shows $((T \rightarrow c_1) \rightarrow c_2)
  \rightarrow c_3$.  Then $(((T \rightarrow c_1) \rightarrow c_2)
  \rightarrow c_3) \intern k$ is shown in the middle below, while
  $\U(T)$ is shown on the right.  Comparing with
  Example~\ref{exa:downward}, we observe that the upward slide in this
  case does indeed invert the downward slide.
\[
\setlength{\unitlength}{4.5mm}
\begin{picture}(7,1)(0,6)
\put(0,3){\line(0,1){2}}
\put(1,4){\line(0,1){1}}
\put(2,2){\line(0,1){1}}
\put(3,1){\line(0,1){1}}
\put(4,3){\line(0,1){1}}
\put(5,2){\line(0,1){1}}
\put(7,1){\line(0,1){1}}
\put(0,5){\line(1,0){1}}
\put(0,3){\line(1,0){2}}
\put(1,4){\line(1,0){3}}
\put(2,2){\line(1,0){1}}
\put(3,1){\line(1,0){4}}
\put(4,3){\line(1,0){1}}
\put(5,2){\line(1,0){2}}
\put(0.35,6.25){\hcirc{\ }}
\put(0.35,5.25){\hcirc{9}}
\put(0.35,4.25){3}
\put(1.35,4.25){\hcirc{5}}
\put(2.35,4.25){7}
\put(0.35,3.25){2}
\put(1.35,3.25){\hcirc{3}}
\put(2.35,3.25){4}
\put(3.35,3.25){7}
\put(1.35,2.25){\hcirc{2}}
\put(2.35,2.25){2}
\put(3.35,2.25){2}
\put(4.35,2.25){3}
\put(5.35,2.25){6}
\put(2.35,1.25){\hcirc{1}}
\put(3.35,1.25){1}
\put(4.35,1.25){2}
\put(5.35,1.25){2}
\put(6.35,1.25){5}
\end{picture}
\quad
\begin{picture}(10,1)(0,6)
\put(0,3){\line(0,1){2}}
\put(1,4){\line(0,1){1}}
\put(2,2){\line(0,1){1}}
\put(3,1){\line(0,1){1}}
\put(4,3){\line(0,1){1}}
\put(5,2){\line(0,1){1}}
\put(7,1){\line(0,1){1}}
\put(0,5){\line(1,0){1}}
\put(0,3){\line(1,0){2}}
\put(1,4){\line(1,0){3}}
\put(2,2){\line(1,0){1}}
\put(3,1){\line(1,0){4}}
\put(4,3){\line(1,0){1}}
\put(5,2){\line(1,0){2}}
\put(0.35,5.25){9}
\put(0.35,4.25){3}
\put(1.35,4.25){5}
\put(0.35,3.25){2}
\put(1.35,3.25){2}
\put(2.35,3.25){7}
\put(3.35,3.25){7}
\put(1.35,2.25){1}
\put(2.35,2.25){2}
\put(3.35,2.25){3}
\put(4.35,2.25){4}
\put(2.35,1.25){\ }
\put(3.35,1.25){2}
\put(4.35,1.25){2}
\put(5.35,1.25){3}
\put(6.35,1.25){6}
\put(7,0){\dashbox{0.25}(3,1){}}
\put(7.35,0.25){1}
\put(8.35,0.25){2}
\put(9.35,0.25){5}
\end{picture}
\quad\!
\begin{picture}(7,1)(0,6)
\put(0,3){\line(0,1){2}}
\put(1,4){\line(0,1){1}}
\put(2,2){\line(0,1){1}}
\put(3,1){\line(0,1){1}}
\put(4,3){\line(0,1){1}}
\put(5,2){\line(0,1){1}}
\put(7,1){\line(0,1){1}}
\put(0,5){\line(1,0){1}}
\put(0,3){\line(1,0){2}}
\put(1,4){\line(1,0){3}}
\put(2,2){\line(1,0){1}}
\put(3,1){\line(1,0){4}}
\put(4,3){\line(1,0){1}}
\put(5,2){\line(1,0){2}}
\put(0.35,5.25){9}
\put(0.35,4.25){3}
\put(1.35,4.25){5}
\put(2.35,4.25){7}
\put(3.35,4.25){7}
\put(0.35,3.25){2}
\put(1.35,3.25){2}
\put(2.35,3.25){3}
\put(3.35,3.25){4}
\put(1.35,2.25){1}
\put(2.35,2.25){2}
\put(3.35,2.25){2}
\put(4.35,2.25){3}
\put(5.35,2.25){6}
\put(3.35,1.25){1}
\put(4.35,1.25){2}
\put(5.35,1.25){2}
\put(6.35,1.25){5}
\end{picture}
\vspace{6\unitlength}
\]
There are also instances where the entry $k$ of
Definition~\ref{def:upward} is different from the entry originally at
the bottom of the upward path.  For example, the same three-step
process for constructing $\U(T)$ is shown below for an example with
$m=2$.  There, $k=2$, even though the original upward path of $T$ had
1 as its bottom entry.
\[
 \setlength{\unitlength}{4.5mm}
\begin{picture}(2,2)(0,2)
\put(0,2){\line(0,1){1}}
\put(1,1){\line(0,1){1}}
\put(2,1){\line(0,1){2}}
\put(0,3){\line(1,0){2}}
\put(0,2){\line(1,0){1}}
\put(1,1){\line(1,0){1}}
\put(0.35,3.25){\hcirc{3}}
\put(1.35,3.25){3}
\put(0.35,2.25){\hcirc{2}}
\put(1.35,2.25){2}
\put(0.35,1.25){\hcirc{1}}
\put(1.35,1.25){1}
\end{picture}
\quad \longrightarrow \quad
\begin{picture}(4,2)(0,2)
\put(0,2){\line(0,1){1}}
\put(1,1){\line(0,1){1}}
\put(2,1){\line(0,1){2}}
\put(0,3){\line(1,0){2}}
\put(0,2){\line(1,0){1}}
\put(1,1){\line(1,0){1}}
\put(0.35,2.25){3}
\put(1.35,2.25){3}
\put(0.35,1.25){2}
\put(1.35,1.25){2}
\put(2,0){\dashbox{0.25}(2,1)}
\put(2.35,0.25){1}
\put(3.35,0.25){1}
\end{picture}
\quad \longrightarrow \quad
\begin{picture}(4,2)(0,2)
\put(0,2){\line(0,1){1}}
\put(1,1){\line(0,1){1}}
\put(2,1){\line(0,1){2}}
\put(0,3){\line(1,0){2}}
\put(0,2){\line(1,0){1}}
\put(1,1){\line(1,0){1}}
\put(0.35,3.25){3}
\put(1.35,2.25){3}
\put(0.35,2.25){2}
\put(1.35,1.25){2}
\put(2,0){\dashbox{0.25}(2,1)}
\put(2.35,0.25){1}
\put(3.35,0.25){1}
\end{picture}
\quad \longrightarrow \quad
\begin{picture}(3,2)(0,2)
\put(0,2){\line(0,1){1}}
\put(1,1){\line(0,1){1}}
\put(2,1){\line(0,1){2}}
\put(0,3){\line(1,0){2}}
\put(0,2){\line(1,0){1}}
\put(1,1){\line(1,0){1}}
\put(0.35,3.25){3}
\put(1.35,3.25){3}
\put(0.35,2.25){2}
\put(1.35,2.25){2}
\put(1.35,1.25){1}
\put(2.35,1.25){1}
\end{picture}
  \]
\vspace{\unitlength}
\end{example}

As with downward slides and before presenting our involution, we must
ensure that the result of an upward slide is always a tableau of the
appropriate skew shape.

\begin{proposition}\label{prop:upshape}
  Suppose $T$ is an \SSYT\ of shape $\lp/\mm$ such that in the upward
  slide of $T$, all the final entries of the reverse row insertions
  land in row 0. Then $\U(T)$ is an \SSYT\ of shape $\la'/\mu'$,
  where $\la'/\la$ (resp.\ $\mu/\mu'$) is a horizontal (resp.\
  vertical) strip.
\end{proposition}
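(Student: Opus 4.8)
The plan is to follow the template of the proof of Proposition~\ref{prop:downshape}, treating the inner and outer boundaries of $\U(T)$ separately and using Lemma~\ref{lem:horizontal} as the main engine. Write $S$ for the tableau $(\cdots(T \rightarrow c_1)\rightarrow\cdots)\rightarrow c_m$ produced by the reverse row insertions of Definition~\ref{def:upward}; by hypothesis every $c_i$ lands in row $0$, so $m'=m$ and $S$ has inner shape $\mm$ and outer shape obtained from $\lp$ by deleting the $m$ rightmost cells of the horizontal strip $\lp/\la$. The inner boundary is then straightforward: since each reverse insertion lands in row $0$, none of them disturbs $\mm$, and the only remaining operation touching the inner boundary is the internal insertion $\intern_r k$ of the bottom cell of $\mu/\mm$ (the external re-insertions grow only the outer boundary). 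One first checks that this cell is genuinely an inside corner of $S$, which holds because $\mu/\mm$ is a vertical strip, so that by Definition~\ref{def:intern} the internal insertion vacates exactly that one cell and bumps upward. Hence the inner shape $\mu'$ of $\U(T)$ is $\mm$ together with the bottom cell of $\mu/\mm$, and $\mu/\mu'$ is the vertical strip $\mu/\mm$ with its bottom cell removed. This conclusion depends only on the \emph{position} of that cell, not on the value $k$ actually inserted, which is why the example in which $k=2$ rather than $1$ causes no trouble.

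For the outer boundary I would first record, exactly as in the proof of Proposition~\ref{prop:downshape}, that Lemma~\ref{lem:horizontal}(c) applied to the reverse insertions of $c_1,\ldots,c_m$ (whose deleted cells run strictly from right to left) shows their reverse bumping paths are nested strictly left to right and that $k_m\leq k_{m-1}\leq\cdots\leq k_1$. Because the re-insertions $\rowinsert k_m,\ldots,\rowinsert k_1$ are external, Lemma~\ref{lem:horizontal}(b) forces each successive bumping path to lie strictly right of the previous one, and Lemma~\ref{lem:horizontal}(a) shows their top cells form a horizontal strip. It remains to fold in the single cell added by the internal insertion together with the cells of $\lp/\la$ left unmoved, that is, those strictly left of $c_m$. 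The unmoved cells are leftmost and already lie in distinct columns, so it suffices to prove two things: that the bumping path of the internal insertion lies strictly left of that of the first re-insertion $\rowinsert k_m$, and that its top cell still lies strictly right of the rightmost unmoved cell of $\lp/\la$. Granting the first, Lemma~\ref{lem:horizontal}(a) applied to the internal insertion followed by $\rowinsert k_m$ shows the internal top cell extends the horizontal strip on the left and occupies a new column; the second claim then mirrors the final paragraph of the proof of Proposition~\ref{prop:downshape}, where one argues that the relevant bumping path either reproduces a reverse bumping path or intersects the path just created, landing appropriately in either case.

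The step I expect to be the main obstacle is establishing that the internal bumping path lies strictly left of the path of $\rowinsert k_m$. This is precisely what the defining condition of $m$ is engineered to deliver, namely that $B_m$ stays weakly right of the upward path of $T$ while $B_{m+1}$ does not. The delicacy is that the upward path of $T$ is defined using the \emph{original} bottom entry of $\mu/\mm$, whereas the internal insertion actually performed on $S$ carries the possibly-different entry $k$, so one must show that the reverse insertions only perturb the entries along the upward path in a controlled way and that the actual internal bumping path in $S$ still sits weakly left of where $\rowinsert k_m$ travels. Showing that the two paths genuinely occupy distinct columns, rather than merely being weakly ordered, is the subtle point; I would handle it by repeatedly invoking Lemma~\ref{lem:inverse} to rewrite the internal insertion and the first re-insertion as reverse insertions on a common tableau, thereby reducing the required column separation to one more application of Lemma~\ref{lem:horizontal}(c).
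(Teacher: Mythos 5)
Your overall architecture is the same as the paper's --- reverse insertions, then the internal insertion, then external re-insertions, with Lemma~\ref{lem:horizontal} as the engine --- and parts of it are done well. Your inner-boundary analysis (the bottom cell of $\mu/\mm$ is an inside corner precisely because $\mu/\mm$ is a vertical strip, so $\intern_r k$ vacates exactly that cell and $\mu/\mu'$ is again a vertical strip) is correct and is actually more explicit than the paper, which passes over this in silence. You have also correctly isolated the crux: showing that the bumping path of the internal insertion of $k$ lies strictly left, in some common row, of the bumping path $B_m'$ of the first re-insertion $\rowinsert k_m$, after which Lemma~\ref{lem:horizontal}(a) finishes the outer boundary.

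That crux, however, is exactly where the proposal stops, and the mechanism you sketch for it does not work. Lemma~\ref{lem:horizontal}(c) takes as \emph{hypothesis} that the two removed cells satisfy ``$c$ strictly left of $c'$'' and only then concludes that the reverse bumping paths are nested. If you use Lemma~\ref{lem:inverse} to realize the internal insertion and $\rowinsert k_m$ as reverse insertions on the common tableau $(S \intern_r k)\rowinsert k_m$, the two cells being reverse-inserted are precisely the two cells these insertions add to the outer boundary, and the assertion that the first lies strictly left of the second is essentially the conclusion you are trying to reach --- so the reduction is circular. The paper obtains the needed one-row strict separation from two observations that are absent from your proposal: first, a case split on whether $B_m$ meets the upward path (if not, $\U$ is just row insertion along the upward path with everything else undone; if so, the internal insertion follows the upward path until it meets $B_m$ and then follows $B_m$, terminating by re-adding $c_m$ --- which, incidentally, also settles your second claim about the unmoved cells of $\lp/\la$, since the first cell added back is $c_m$ itself); and second, the fact that $B_m'$ cannot pass through the bottom cell of the upward path because that cell has just been \emph{vacated} by the internal insertion, which forces $B_m'$ strictly to the right of the path of $k$ in that row. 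This vacated-cell observation is the single fact that upgrades ``weakly ordered'' to ``strictly ordered''; it also quietly resolves the $k$-versus-original-entry delicacy you flag, and without it or a substitute the argument does not close.
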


\begin{proof}
  If none of the reverse bumping paths $B_1, \ldots, B_m$ in the
  upward slide of $T$ intersect the upward path, then inserting the
  bottom cell of the upward path into the row above will follow the
  upward path. So applying $\U$ to $T$ will simply amount to row
  insertion along the upward path, with any movement along the $B_i$
  being inverted in the course of applying $\U$.  Thus, with notation
  as in Definition~\ref{def:upward}, we can assume that $B_m$
  intersects the upward path of $T$, where $B_m$ is the last reverse
  bumping path that stays weakly right of the upward path of $T$.
  Note that by Lemma~\ref{lem:horizontal}(c), no other reverse bumping
  path $B_{m'}$ with $m'<m$ can intersect the upward path.  When $k$
  is internally inserted into $T$, it will follow the upward path of
  $T$ until it intersects $B_m$ from which time it will follow $B_m$,
  ultimately adding $c_m$ back to the tableau. Next inserting $k_m$
  will result in a bumping path $B_m'$ following $B_m$ until one row
  below the point of intersection.  Note that $B_m'$ cannot intersect
  the bottom cell of the upward path, since that cell is now empty.
  Therefore, in the row of the bottom cell of the upward path, $B_m'$
  is strictly right of the upward path.  Thus we can apply
  Lemma~\ref{lem:horizontal}(a) to deduce that $B_m'$ will necessarily
  remain strictly right of the bumping path for $k$.  Finally, by
  Lemma~\ref{lem:horizontal}(c), we have that $k_m \leq \cdots \leq
  k_1$. Thus, by (a) and (b) Lemma~\ref{lem:horizontal}, inserting the
  remaining entries still results in the addition of a horizontal
  strip.
\end{proof}

Our involution will consist of either applying a downward slide or an
upward slide. The decision for which slide to apply is roughly based
on which of the downward path of $T$ or the upward path of $T$ lies
further to the right.

\begin{definition}\label{def:involution}
  Consider the set of \SSYTs\ $T$ of shape $\lp/\mm$ such that that
  $\lp/\la$ is a horizontal strip and $\mu/\mm$ is a vertical strip.
  Define a map $\phi$ on such $T$ by
  \[  \phi(T) \ = \ \left\{ \begin{array}{rl}
      \D(T) &        \mbox{if $T$ has no upward path or }\\
      & \mbox{the downward path of $T$ exists and \isgood,} \\[1ex]
      \U(T) & \mbox{otherwise}.
    \end{array} \right.
  \]
\end{definition}

\begin{theorem}\label{thm:involution}
  The map $\phi$ defines an involution on the set of \SSYTs\ with
  shapes of the form $\lp/\mm$ where $\lp/\la$ is a horizontal strip
  and $\mu/\mm$ is a vertical strip.
\end{theorem}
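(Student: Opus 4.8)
The plan is to show that $\phi$ is an involution by exhibiting it as a pairing of two distinguished families of tableaux together with a set of fixed points. Write $p = |\lp/\la|$ and $k = |\mu/\mm|$ for the sizes of the horizontal and vertical strips. Propositions~\ref{prop:downshape} and~\ref{prop:upshape} show that a nontrivial downward slide sends $(p,k) \mapsto (p-1,k+1)$ while an upward slide sends $(p,k) \mapsto (p+1,k-1)$, so each slide transfers a single cell between the two strips, reverses the sign $(-1)^k$, and is a natural candidate to invert the other. The fixed points of $\phi$ are exactly the tableaux $T$ with neither an upward nor a downward path, for then $\phi(T) = \D(T) = T$; every tableau on which $\U$ is applied has $k \geq 1$ and is genuinely moved. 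I therefore partition the remaining tableaux into the D-type (those with $\phi = \D$ and $\D(T) \neq T$, equivalently those possessing a downward path that either \isgood\ or occurs with no upward path) and the U-type (those with $\phi = \U$), and I aim to prove that $\D$ and $\U$ are mutually inverse bijections between these two families.

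First I would establish $\U(\D(T)) = T$ for every D-type $T$. The key point is that in $\D(T)$ the final entry $k_m$ of the downward slide occupies the bottom cell of the enlarged vertical strip, at the left end of row $r$; by Lemma~\ref{lem:inverse}(b), internally inserting this cell upward retraces the downward path of $T$ exactly and restores the cell $c_m$, so the upward path of $\D(T)$ is the downward path of $T$. It then remains to check that the reverse row insertions opening the upward slide of $\D(T)$ strip off precisely the cells $c_{m-1},\ldots,c_1$ that $\D$ re-inserted, in the opposite order, and that their final entries land in row $0$ as required by Proposition~\ref{prop:upshape}. This is exactly what the ``stays weakly right of the upward path'' criterion of Definition~\ref{def:upward} selects, once combined with parts (a) and (c) of Lemma~\ref{lem:horizontal}, which force the relevant bumping paths to be nested and to move weakly rightward. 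Re-inserting $k_{m-1},\ldots,k_1$ then replaces $c_{m-1},\ldots,c_1$ and recovers $T$. The symmetric identity $\D(\U(T)) = T$ for U-type $T$ runs along the same lines with the roles of Lemma~\ref{lem:inverse}(a) and (b) exchanged: the cell that $\U$ carries up the upward path is brought back down by the first reverse insertion of $\D$, whose reverse bumping path is the upward path of $T$.

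Finally, and most delicately, I must confirm that the decision rule of Definition~\ref{def:involution} is consistent, namely that $\D$ sends D-type tableaux to U-type tableaux and $\U$ sends U-type to D-type, so that $\phi$ toggles between its two branches rather than re-selecting the same slide. That $\D(T)$ and $\U(T)$ have the correct vertical strip, hence an upward path exactly when needed, is immediate from the shape count above; the real content is the dichotomy of whether a downward path does or does not \begood, which must be shown to be exactly interchanged by the two slides. I expect this to be the main obstacle: in contrast to the clean inversion arguments above, it requires comparing the bottom cell of the newly created downward path against the bottom cell of the newly created vertical strip, and so demands a careful case analysis of where the various bumping and reverse bumping paths terminate relative to one another, controlled throughout by the nesting and horizontal-strip conclusions of Lemma~\ref{lem:horizontal}. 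Once this interchange is in hand, the two inversion identities assemble into $\phi^2 = \mathrm{id}$.
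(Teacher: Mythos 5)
Your overall strategy---fixed points are the tableaux with neither path, and $\D$, $\U$ are mutually inverse bijections between the two non-fixed families---is the same as the paper's, but two of your steps do not go through as written. First, the claim that ``the upward path of $\D(T)$ is the downward path of $T$,'' justified by applying Lemma~\ref{lem:inverse}(b) to $\D(T)$, is false. Lemma~\ref{lem:inverse}(b) inverts the reverse insertion along the downward path only in the \emph{intermediate} tableau $S = (\cdots(T\rightarrow c_1)\cdots)\rightarrow c_m$; the tableau $\D(T)$ differs from $S$ by the re-insertion of $k_{m-1},\ldots,k_1$, which can refill $c_m$ and alter entries on the downward path. Indeed, in the paper's own running example (Examples~\ref{exa:downward} and~\ref{exa:upward}) the downward path of $T$ occupies rows $1$ through $4$, while the upward path of $\D(T)$ continues up through rows $5$ and $6$. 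What is actually true, and what the argument needs, is weaker: the two paths share a bottom cell and the upward path stays weakly left of the downward path, while the bumping paths of the re-inserted $k_i$ stay weakly right of it; this is what forces the opening reverse insertions of $\U$ to strip off exactly the re-inserted cells and return to $S$, and only \emph{then} does Lemma~\ref{lem:inverse}(b) apply to the internal insertion. (Relatedly, the cells that $\D$ re-inserts need not be $c_{m-1},\ldots,c_1$; the cell added when inserting $k_{m-1}$ can be $c_m$ itself.)

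Second, and more seriously, you explicitly defer the verification that the decision rule of Definition~\ref{def:involution} toggles---that $\D$ of a D-type tableau is U-type and $\U$ of a U-type tableau is D-type---calling it ``the main obstacle'' without resolving it. Without this, $\phi(\phi(T))$ could select the same slide twice and the two inversion identities never get invoked, so the proposal does not yet prove $\phi^2=\mathrm{id}$. The paper closes this gap with the nesting statements of Lemma~\ref{lem:horizontal}(c): for $T$ of D-type, cells of $\lp/\la$ strictly right of $c_m$ have reverse bumping paths landing in row $0$, while cells strictly left of $c_m$ have reverse bumping paths strictly left of the downward path, a property that persists in $\D(T)$; hence the downward path of $\D(T)$, if any, fails to \begood\ and $\phi$ selects $\U$. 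Symmetrically, for $T$ of U-type the bumping path of the internal insertion becomes the downward path of $\U(T)$ and it \isgood, so $\phi$ selects $\D$. You correctly diagnose where the difficulty lies, but diagnosing it is not the same as carrying it out, and this is precisely the content of the theorem.
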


\begin{proof}
  We refer the reader to Examples~\ref{exa:downward}
  and~\ref{exa:upward} for illustrations of the ideas of the proof.

  First suppose that $\phi(T) = \D(T)$. By
  Definition~\ref{def:involution}, the first way in which this can
  happen is if $T$ has neither an upward path nor a downward path.
  This corresponds to the case $m'=m$ in Definition~\ref{def:downward}
  which, by Lemma~\ref{lem:inverse}(b), results in $\phi(T)=\D(T)=T$,
  trivially an involution.
  
  Therefore, assume the downward path of $T$ exists and \isgood, so by
  Proposition~\ref{prop:downshape}, $\phi(T)$ has the required
  shape. Moreover, since $\D(T)$ adds a cell to $\mu/\mm$, $\phi(T)$
  must have an upward path.  Suppose cell $c_m$ is at the top of the
  downward path of $T$. It follows from Lemma~\ref{lem:horizontal}
  that all cells strictly right of $c_m$ and above $\la$ in $T$ have
  reverse bumping paths strictly right of the downward path of $T$ and,
  by definition of the downward path, their reverse row insertions all land in row 0.
  For any cell $c$ strictly left of $c_m$ in $T$,
  Lemma~\ref{lem:horizontal}(c) implies that the reverse bumping path
  of $c$ in $T$ must remain strictly left of the downward path of $T$,
  and this property persists in $\phi(T)$. Therefore either $\phi(T)$
  has no downward path or the downward path does not \begood, and so
  $\phi(\phi(T)) = \U(\D(T))$.
  
  To show that $\U(\D(T)) = T$, first consider $\D$ applied to $T$.
  After reverse row inserting along the reverse bumping paths
  including the downward path, suppose we have arrived at an SSYT $S$.
  We next row insert the final entries $k_{m-1}, \ldots, k_1$ of
  Definition~\ref{def:downward}.  These new bumping paths may have
  been affected by the changes on the downward path but, as shown in
  the proof of Proposition~\ref{prop:downshape}, the bumping path $B$
  of $k_{m-1}$ still lies weakly right of the downward path.  Next, we
  consider the application of $\U$ to $\D(T)$ and observe that, since
  the downward path of $T$ exits right, it shares a bottom cell with
  the upward path of $\D(T)$.  Thus the upward path must stay weakly
  left of the downward path and, in particular, $B$ lies weakly right
  of the upward path.  Moreover, $B$ corresponds to the $B_m$ of
  Definition~\ref{def:upward} since, as mentioned in the previous
  paragraph, reverse bumping paths further left will have a bottom
  cell that is strictly left of the bottom cell of the upward path.  A
  key idea is now evident: after we perform the first part of $\U$ by
  applying the reverse row insertions, we will return to the SSYT $S$.
  Therefore, the rest of the application of $\U$ to $\D(T)$ will
  invert the reverse row insertions from the application of $\D$ to
  $T$, as required.

  Next, suppose $\phi(T) = \U(T)$. By Definition~\ref{def:involution},
  $T$ has an upward path and either has no downward path or the
  downward path does not \begood.  So, the downward path, if it
  exists, does not stay weakly right of the upward path.  In
  particular, all the final entries of the reverse row insertions in
  the upward slide land in row 0.  So by
  Proposition~\ref{prop:upshape}, $\phi(T)$ has the required shape.
 
  If none of the reverse bumping paths $B_1, \ldots, B_m$ in the
  upward slide of $T$ intersect the upward path then, as we observed
  in the proof of Proposition~\ref{prop:upshape}, applying $\U$ to $T$
  will simply amount to row insertion along the upward path, with any
  movement along the $B_i$ being inverted in the course of applying
  $\U$.  In particular, the upward path of $T$ will become the
  downward path of $\U(T)$ and hence it \isgood.  Again, the downward
  path of $\U(T)$ will not intersect the other reverse bumping paths
  of $\U(T)$, which will still be $B_1, \ldots, B_m$.  So applying
  $\D$ to $\U(T)$ will simply amount to the reverse row insertion
  along the downward path.  Thus $\phi(\phi(T)) = \D(\U(T)) = T$.
  
  On the other hand, if $B_m$ from Definition~\ref{def:upward}
  intersects the upward path, we know from the proof of
  Proposition~\ref{prop:upshape} that internally inserting the bottom
  entry $k$ of the upward path into the row above will eventually
  follow $B_m$ and all other insertions will have bumping paths $B_m',
  B_{m-1}', \ldots, B_1'$ strictly to the right. Therefore this
  bumping path for $k$ will become the downward path of $\U(T)$ and
  again it clearly \isgood.  Thus when we apply $\D$ to $\U(T)$, we
  will first reverse row insert along the $B_i'$ and then invert the
  changes caused by the internal insertion of $k$.  As a result, when
  we complete the $\D$ operation by reinserting the final entries of
  $B_m', B_{m-1}', \ldots, B_1'$, the bumping paths will be the same
  as the original reverse bumping paths when we applied $\U$ to $T$.
  Thus $\phi(\phi(T)) = \D(\U(T)) = T$.
\end{proof}

We now have all the ingredients needed to prove the skew Pieri rule.

\begin{proof}[Proof of Theorem~\ref{thm:skewpieri}]
  Using the expansion of $s_{\lp/\mm}$ in terms of \SSYTs\ as in
  \eqref{equ:schur}, observe that if $\phi(T) \neq T$, then the $T$
  and $\phi(T)$ occur with different signs in the right-hand side of
  \eqref{equ:skewpieri}. Since $\phi$ clearly preserves the monomial
  associated to an SSYT, the monomials corresponding to $T$ and
  $\phi(T)$ in the right-hand side of \eqref{equ:skewpieri} will
  cancel out.  Because $s_{\la/\mu} s_n = s_{(\la/\mu) * (n)}$, it
  remains to show that there is a monomial-preserving bijection from
  fixed points of $\phi$ to \SSYTs\ of shape $(\la/\mu) * (n)$.

  Note that $T$ is a fixed point of $\phi$ only if $T$ has neither an
  upward path nor a downward path. This happens if and only if
  $\mm=\mu$ and when reverse row inserting the cells of $\lp/\la$ from
  right to left, every final entry lands in row 0.  In particular, the
  entries of $T$ remaining after reverse row inserting the cells of
  $\lp/\la$ form an SSYT of shape $\la/\mu$.  Say the final entries of
  the reverse row insertions are $k_n, \ldots, k_1$ in the order
  removed. By Lemma~\ref{lem:horizontal}(c), since $\lp/\la$ is a
  horizontal strip, we have $k_1 \leq \cdots \leq k_n$ and so these
  entries form an SSYT of shape $(n)$.  By Lemma~\ref{lem:inverse},
  this process is invertible and therefore establishes the desired
  bijection.
\end{proof}

\section{Concluding remarks}\label{sec:conclusion}

\subsection{Littlewood--Richardson fillings}

We proved Theorem~\ref{thm:skewpieri} by working with SSYTs.  In
particular, we showed that the two sides of~\eqref{equ:skewpieri} were
equal when expanded in terms of monomials.  Alternatively, we could
consider the expansions of both sides of~\eqref{equ:skewpieri} in
terms of Schur functions.  The Littlewood--Richardson rule states that
the coefficient of $s_\nu$ in the expansion of any skew Schur function
$s_{\lambda/\mu}$ is the number of \emph{Littlewood--Richardson
  fillings} (\emph{LR-fillings}) of shape $\lambda/\mu$ and content
$\nu$.  (The interested reader unfamiliar with LR-fillings can find
the definition in \cite{ECII}.)  It is not hard to check that our maps
$\D$ and $\U$ send LR-fillings to LR-fillings, and bumping within
LR-fillings has some nice properties. For example, the entries along a
(reverse) bumping path are always $1,2,\ldots,r$ from bottom to top
for some $r$.

However, we chose to give our proof in terms of SSYTs because one does
not need to invoke the power of the Littlewood--Richardson rule to
prove the classical Pieri rule, and we wanted the same to apply to the
skew Pieri rule.

\subsection{The bigger picture}\label{sub:bigpicture}

We would like to conclude by asking if the skew Pieri rule can be
shown to be a special case of a larger framework.  For example,
\cite{Fom95} and \cite{Lam06} both give general setups that might be
relevant. It would be of obvious interest if these frameworks or any
others in the literature could be used to rederive the skew Pieri
rule.

In a similar spirit, we close with a conjectural rule for the product
$s_{\la/\mu} s_{\si/\tau}$, which has now been proved by Lam, Lauve and Sottile in \cite{LLS09pr}.  This rule gives the skew Pieri rule when
$\si/\tau = (n)$ and gives the classical Littlewood--Richardson rule
when $\mu=\tau=\emptyset$.  The Littlewood--Richardson rule can itself
be used to evaluate the general product $s_{\la/\mu} s_{\si/\tau}$,
but our rule will be different in that, like the skew Pieri rule, we
will be adding boxes to both the inside and outside of $\la/\mu$.

As before, fix a skew shape $\la/\mu$, and suppose we have a skew
shape $\lp/\mm$ such that $\lp \supseteq \la$ and $\mm \subseteq \mu$.
We no longer require that $\lp/\la$ (resp.\ $\mu/\mm$) is a horizontal
(resp.\ vertical) strip.  We will need a few new definitions that are
variants of those that arise in the Littlewood--Richardson rule.  Let
$T^+$ be an SSYT of shape $\lp/\la$ and let $T^-$ be a filling of
$\mu/\mm$.  We let $T^-$ be an \emph{\assyt} (\ASSYT), meaning that
the entries of $T^-$ strictly decrease from left-to-right along rows,
and weakly decrease up columns.  When $\la/\mu=7541/33$ and
$\lp/\mm=9953/1$, an example of a pair $(T^-,T^+)$ with the
above-stated properties is
\begin{equation}\label{equ:skewbasedlr}
\setlength{\unitlength}{4.5mm}
\begin{picture}(9,2)(0,2)
\put(0,2){\line(0,1){2}}
\put(1,3){\line(0,1){1}}
\put(3,0){\line(0,1){2}}
\put(4,2){\line(0,1){1}}
\put(5,1){\line(0,1){1}}
\put(7,0){\line(0,1){1}}
\put(0,4){\line(1,0){1}}
\put(0,2){\line(1,0){3}}
\put(1,3){\line(1,0){3}}
\put(3,0){\line(1,0){4}}
\put(4,2){\line(1,0){1}}
\put(5,1){\line(1,0){2}}
\put(1.35,3.25){5}
\put(2.35,3.25){6}
\put(4.35,2.25){3}
\put(7.35,0.25){2}
\put(8.35,0.25){4}
\put(5.35,1.25){1}
\put(6.35,1.25){4}
\put(7.35,1.25){4}
\put(8.35,1.25){5}
\put(0.35,1.25){5}
\put(1.35,0.25){3}
\put(1.35,1.25){3}
\put(2.35,1.25){1}
\put(2.35,0.25){2}
\end{picture}\ .
\end{equation}
\vspace{1.5\unitlength}

The reverse reading word of the pair $(T^-, T^+)$ is the sequence of
entries obtained by first reading the entries of $T^-$ from
bottom-to-top along its columns, starting with rightmost column and
moving left, followed by reading the entries of $T^+$ from
right-to-left along its rows, starting with the bottom row and moving
upwards.  The reverse reading word of the example above is
21335425441365.  A word $w$ is said to be Yamanouchi if, in the first
$j$ letters of $w$, the number of occurrences of $i$ is no less than
the number of occurrences of $i+1$, for all $i$ and $j$.
For a partition $\tau$, a word $w$ is said to be $\tau$-Yamanouchi if
it is Yamanouchi when prefixed by the following concatenation:
$\tau_1$ 1's, followed by $\tau_2$ 2's, and so on.  The reverse
reading word of~\eqref{equ:skewbasedlr} is certainly not Yamanouchi
but is 5321-Yamanouchi.

For a proof of the following conjecture, see Theorem~6 and Remark~7 in \cite{LLS09pr}.
\begin{conjecture}\label{con:skewbasedlr}  For any skew shapes $\la/\mu$ and $\si/\tau$, 
\[
s_{\la/\mu} s_{\si/\tau} = \sum_{\substack{T^- \in
    \mathrm{\ASSYT}(\mu/\mm) \\ T^+ \in \mathrm{\SSYT}(\lp/\la)}}
(-1)^{|\mu/\mm|} s_{\lp/\mm},
\]
where the sum is over all \ASSYTs\ $T^-$ of shape $\mu/\mm$ for some
$\mm \subseteq \mu$, and \SSYTs\ $T^+$ of shape $\lp/\la$ for some
$\lp \supseteq \la$, with the following properties:
\begin{itemize}
\item the combined content of $T^-$ and $T^+$ is the component-wise
  difference $\si-\tau$, and
\item the reverse reading word of $(T^-,T^+)$ is $\tau$-Yamanouchi.
\end{itemize}
\end{conjecture}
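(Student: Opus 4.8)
The plan is to prove the identity algebraically, by regarding $\Lambda$ as a self-dual graded Hopf algebra and pairing both sides against an arbitrary Schur function; this is the route taken in \cite{LLS09pr}, and it sidesteps the difficulty that skew Schur functions are linearly dependent (so that a ``skew expansion'' is far from unique). Recall that $\Lambda$ carries the coproduct $\Delta s_\rho = \sum_\alpha s_{\rho/\alpha}\otimes s_\alpha$, the antipode $S(s_\nu)=(-1)^{|\nu|}s_{\nu^t}$, and the Hall pairing with respect to which multiplication by $s_\mu$ is adjoint to the skewing operator $s_\mu^\perp$, so that $s_\mu^\perp s_\la = s_{\la/\mu}$. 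Since $\{s_\rho\}$ is a basis and both sides of the conjectured identity are genuine elements of $\Lambda$, it suffices to show that $\langle s_{\la/\mu}s_{\si/\tau}, s_\rho\rangle$ agrees with the corresponding pairing of the right-hand side for every partition $\rho$.

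First I would expand the left-hand pairing through the coproduct. Because the Hall pairing is a Hopf pairing, $\langle s_{\la/\mu}s_{\si/\tau}, s_\rho\rangle = \langle s_{\la/\mu}\otimes s_{\si/\tau}, \Delta s_\rho\rangle = \sum_\alpha \langle s_{\la/\mu}, s_{\rho/\alpha}\rangle\, c^{\si}_{\tau\alpha}$, turning the left side into a finite sum of products of Littlewood--Richardson coefficients indexed by partitions $\alpha$. On the right-hand side, the coefficient of $s_{\lp/\mm}$ is $(-1)^{|\mu/\mm|}$ times the number of admissible pairs $(T^-,T^+)$, and pairing $s_{\lp/\mm}$ with $s_\rho$ gives $c^{\lp}_{\mm\rho}$. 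Thus the entire problem reduces to a finite identity among products of Littlewood--Richardson coefficients, weighted by the signs $(-1)^{|\mu/\mm|}$.

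The signs and the anti-semistandard condition are exactly the fingerprints of the antipode, and exploiting this is the crux of the argument. Because $S(s_{\mu/\mm})$ carries the sign $(-1)^{|\mu/\mm|}$ and conjugates shapes, the \ASSYT\ $T^-$ of shape $\mu/\mm$ --- whose entries strictly decrease along rows and weakly decrease up columns --- is precisely the combinatorial shadow of applying $S$ to the inner skew piece. I would therefore rewrite $\sum_{\mm}(-1)^{|\mu/\mm|}(\cdots)$ as the action of $S$ on one tensor factor in the coproduct expansion of $s_\la$ (or of $s_\si$), converting the inner sum over $\mm\subseteq\mu$ into an antipode and re-expressing the resulting Littlewood--Richardson coefficients in terms of fillings of the combined shape. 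Matching the reverse reading words of the two models then forces the content constraint that the combined content equal $\si-\tau$ and, after prepending $\tau_1$ ones, $\tau_2$ twos, and so on, produces exactly the $\tau$-Yamanouchi condition.

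The main obstacle I expect is this last step: reconciling the two combinatorial bookkeepings. On one side the data are LR-fillings produced by the coproduct and antipode; on the other they are the pairs $(T^-,T^+)$ with the prescribed decreasing and increasing conventions and the single global $\tau$-Yamanouchi condition on the concatenated reverse reading word. Showing that the antipode's conjugation interacts with the $\tau$-padding to yield precisely this Yamanouchi constraint --- rather than some more complicated signed overcount --- is the delicate part, and is what \cite{LLS09pr} establish in their Theorem~6 and Remark~7. I note that one could instead hope to extend the sign-reversing involution $\phi$ of Section~\ref{sec:involution}, letting the downward and upward slides add and remove a general \SSYT\ $T^+$ and \ASSYT\ $T^-$ in place of single strips; but controlling the Yamanouchi condition through the RSK bumping paths is exactly where such an approach stalls, which is why a purely combinatorial proof remains open.
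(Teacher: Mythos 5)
First, a point of comparison: the paper does not prove this statement. It is stated explicitly as a conjecture, with the proof attributed to Lam, Lauve and Sottile \cite{LLS09pr} (their Theorem~6 and Remark~7), and the paper records that a combinatorial proof remains open. So there is no internal proof to measure your attempt against; the only available benchmark is the external Hopf-algebraic argument, and your outline is indeed a sketch of that same route rather than a new one.

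Judged as a proof, however, your proposal has a genuine gap, and you essentially concede it yourself. The algebraic skeleton is fine: pairing against $s_\rho$, using self-duality to write $\langle s_{\la/\mu}s_{\si/\tau}, s_\rho\rangle = \sum_\alpha \langle s_{\la/\mu}, s_{\rho/\alpha}\rangle\, c^{\si}_{\tau\alpha}$, identifying $\langle s_{\lp/\mm}, s_\rho\rangle = c^{\lp}_{\mm\rho}$, and recognizing the sign $(-1)^{|\mu/\mm|}$ together with the \ASSYT\ condition as the trace of the antipode $S(s_\nu)=(-1)^{|\nu|}s_{\nu^t}$ --- all of this is correct and is the right framework. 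But the entire content of the theorem lives in the step you defer: converting the antipode-twisted coproduct expansion into the specific combinatorial model of pairs $(T^-,T^+)$ with combined content $\si-\tau$ and a $\tau$-Yamanouchi reverse reading word, without sign cancellation or overcounting. You name this as ``the delicate part'' and point to \cite{LLS09pr} for it; that is a citation, not an argument. In particular you never actually state the Hopf-algebra identity that drives the proof (an identity of the shape $(a^{\perp}b)\cdot c = \sum b\,{\cdot}\,(S(a_{(1)})^{\perp}c)$ skewed by $a_{(2)}$, applied with $a=s_\mu$ and then iterated against $s_{\si/\tau}$), nor do you verify that the \ASSYT\ convention here matches the conjugation coming from $S$, which is exactly where a signed overcount could appear. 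As a description of why the conjecture is true and of where the difficulty sits --- including your accurate remark that extending the involution $\phi$ of Section~\ref{sec:involution} stalls on controlling the Yamanouchi condition through bumping paths --- your write-up is sound; as a proof it is incomplete in the same place the paper itself leaves the matter to \cite{LLS09pr}.
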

For example, to the product $s_{7541/33}\, s_{755431/5321}$, the pair
$(T^-,T^+)$ of~\eqref{equ:skewbasedlr} contributes $-s_{9953/1}$.
Note that when $\mu=\tau=\emptyset$, Conjecture~\ref{con:skewbasedlr}
is exactly the classical Littlewood--Richardson rule.  If instead
$\si/\tau = (n)$, then all the entries of $T^-$ and $T^+$ must be 1's,
and the skew Pieri rule results.

Another interesting special case of Conjecture~\ref{con:skewbasedlr} is
when $\si/\tau$ is a horizontal strip with row lengths
$\rho=\sigma-\tau$ from bottom to top, with $\rho$ a partition.  In
this case, $s_{\si/\tau} = h_\rho$, while the $\tau$-Yamanouchi
property is trivially satisfied for any $(T^-,T^+)$ with content
$\si-\tau$.  Conjecture~\ref{con:skewbasedlr} then gives the following
expression for the product $s_{\la/\mu} h_\rho$ for any skew shape
$\la/\mu$ and partition $\rho$:
\[
s_{\la/\mu} h_\rho = \sum_{\substack{T^- \in \mathrm{\ASSYT}(\mu/\mm) \\ T^+ \in \mathrm{\SSYT}(\lp/\la)}} (-1)^{|\mu/\mm|} s_{\lp/\mm},
\]
where the sum is over all \ASSYTs\ $T^-$ of shape $\mu/\mm$ for some
$\mm \subseteq \mu$, and \SSYTs\ $T^+$ of shape $\lp/\la$ for some
$\lp \supseteq \la$, such that the combined content of $T^-$ and $T^+$
is $\rho$.  Observe that this result follows directly from repeated
applications of the skew Pieri rule.


\section{Acknowledgments} 
We are grateful to a number of experts for informing us that they too
were surprised by the existence of the skew Pieri rule, and
particularly to Richard Stanley for providing an algebraic proof of
the $n=1$ case that preceded our combinatorial proof. We are also
grateful to Thomas Lam for extending Stanley's proof to the general
case and for his willingness to append his proof to this manuscript.
This research was performed while the second author was visiting MIT,
and he thanks the mathematics department for their hospitality.
Conjecture verification was performed using
\cite{BucSoftware,SteSoftware}.

\appendix
\section{An algebraic proof (by Thomas Lam)}\label{sec:algebraic}
Let $\langle .,. \rangle: \Lambda \times \Lambda \to \mathbb{Q}$ denote the (symmetric, bilinear) Hall inner product.
For $f \in \Lambda$, we let $f^\perp: \Lambda \to \Lambda$ denote the adjoint operator to multiplication by $f$, so that $\langle fg, h\rangle = \langle g,f^\perp h \rangle$ for $g,h \in \Lambda$.  

\begin{proposition}\label{prop:appendix}
For $n \geq 1$, and $f, g \in \Lambda$, we have
\begin{equation}\label{E:prop}
f \, h_n^\perp(g) = \sum_{k=0}^n (-1)^k h_{n-k}^\perp(e_k^\perp(f)
g).
\end{equation}
\end{proposition}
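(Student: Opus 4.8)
The plan is to derive \eqref{E:prop} formally from three standard features of the Hopf algebra $\Lambda$ under the Hall inner product, avoiding any manipulation of tableaux. The first ingredient is that the skewing map $f \mapsto f^\perp$ is a ring homomorphism into $\mathrm{End}(\Lambda)$, that is $(fg)^\perp = f^\perp g^\perp$; this is immediate from the defining adjunction $\langle fg, h \rangle = \langle g, f^\perp h\rangle$, since $\langle (fg)^\perp h, w\rangle = \langle h, fgw\rangle = \langle f^\perp g^\perp h, w\rangle$ for all $w$. The second ingredient is the comultiplication $\Delta$ on $\Lambda$, adjoint to multiplication, together with the fact that the generating function $H(t)=\sum_m h_m t^m$ is group-like, i.e. $\Delta h_m = \sum_{i+j=m} h_i \otimes h_j$. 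Because $\Delta$ is an algebra map and multiplication and comultiplication are mutually adjoint, these combine to give the Leibniz-type product rule $h_m^\perp(AB) = \sum_{i+j=m} (h_i^\perp A)(h_j^\perp B)$ for all $A,B \in \Lambda$, which I would verify by pairing both sides against an arbitrary $w$ and expanding $\Delta(h_m w) = \Delta(h_m)\Delta(w)$.

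With these in hand, the core of the argument is a reindexing. I would apply the product rule to each summand on the right of \eqref{E:prop}, writing $h_{n-k}^\perp(e_k^\perp(f)\,g) = \sum_{i+j=n-k} (h_i^\perp e_k^\perp f)(h_j^\perp g)$, and then group the resulting triple sum according to the exponent $j$ of the operator applied to $g$. For fixed $j$ one has $i = n-j-k$, so the coefficient multiplying $h_j^\perp g$ is $\sum_{k=0}^{n-j} (-1)^k h_{n-j-k}^\perp e_k^\perp f$. Using that skewing is a homomorphism, this equals $\bigl(\sum_{k=0}^{m} (-1)^k h_{m-k} e_k\bigr)^\perp f$ with $m = n-j$.

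The computation then collapses by the fundamental relation $\sum_{k=0}^{m} (-1)^k h_{m-k} e_k = \delta_{m,0}$, which is exactly the coefficient of $t^m$ in the generating-function identity $H(t)E(-t) = 1$. Hence every term with $j < n$ vanishes and only $m = 0$, i.e. $j = n$, survives, leaving $(h_n^\perp g)\,f$ on the right, which equals the left-hand side by commutativity. The only genuine subtlety, and the step I expect to require the most care, is the bookkeeping in the reindexing together with the recognition that the inner alternating sum over $k$ is precisely the $H(t)E(-t)=1$ relation; once that is seen, the remainder is purely formal. Note also that nothing in the argument forces $n \geq 1$, so the same reasoning yields the identity for $n=0$ as the trivial case $fg = h_0^\perp(fg)$.
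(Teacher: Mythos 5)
Your proposal is correct and follows essentially the same route as the paper's proof: both rest on the coproduct rule $h_m^\perp(AB)=\sum_{i+j=m}(h_i^\perp A)(h_j^\perp B)$, the fact that $f\mapsto f^\perp$ is a ring homomorphism, and the identity $\sum_k(-1)^k h_{m-k}e_k=\delta_{m,0}$ applied under $\perp$ after the same reindexing of the triple sum. The only difference is cosmetic: you derive the coproduct rule from the Hopf structure where the paper simply cites Macdonald, and you phrase the alternating-sum identity via $H(t)E(-t)=1$ rather than as \cite[2.6$'$]{Mac95}.
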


\begin{proof}
We shall use the formula \cite[2.6$'$]{Mac95}, for $n \geq 1$
\begin{equation}\label{E:eh}
\sum_{i=0}^n (-1)^i e_i h_{n-i} = 0
\end{equation}
and \cite[Example I.5.25(d)]{Mac95}
\begin{equation}\label{E:h}
h_n^\perp(fg) = \sum_{i=0}^n h_{n-i}^\perp(f) h_i^\perp(g).
\end{equation}
In the following we shall use the fact that the map $\Lambda \to {\rm End}_{\mathbb{Q}}(\Lambda)$ given by $f \to f^\perp$ is a ring homomorphism \cite[Example I.5.3]{Mac95}.  Starting from the right-hand side of~\eqref{E:prop} and using \eqref{E:h} we have
\[
\sum_{k=0}^n (-1)^k h_{n-k}^\perp(e_k^\perp(f) g)
= \sum_{k=0}^n (-1)^k \sum_{j=0}^{n-k} h_j^\perp(e_k^\perp(f))
h_{n-k-j}^\perp(g).
\]
Under the substitution $n-k=j+i$, the right-hand side becomes
\begin{equation}\label{E:for_reducing}
\sum_{i=0}^n (-1)^{n-i} \left(\sum_{j=0}^{n-i} (-1)^{j}
(h_j^\perp e_{n-j-i}^\perp)(f)\right) h_i^\perp(g).
\end{equation}
Since $\sum_{j=0}^{n-i} (-1)^{j} (h_j^\perp e_{n-j-i}^\perp)(f) = 0$
for $n - i > 0$ using (\ref{E:eh})$^\perp$, but is equal to $f$ for
$n = i$, we see that~\eqref{E:for_reducing} reduces to $f\,h_n^\perp(g)$.
\end{proof}

\begin{proof}[Proof of Theorem \ref{thm:skewpieri}]
It is well known (see \cite[I.(4.8), I.(5.1)]{Mac95} or \cite[Corollary 7.12.2, (7.60)]{ECII}) that for two partitions $\la$ and $\mu$, we have $\langle s_\la,s_\mu \rangle = \delta_{\la,\mu}$ and $s_\mu^\perp s_\la = s_{\la/\mu}$, where $s_{\la/\mu} = 0$ if $\mu \not \subseteq \la$.  Let $g \in \Lambda$.  
We calculate 
\begin{align*}
\langle s_{\la/\mu} h_n, g \rangle 
= \langle s_{\la/\mu}, h_n^\perp g \rangle 
= \langle s_\mu^\perp s_\la, h_n^\perp g \rangle 
&= \langle s_\la, s_\mu h_n^\perp g \rangle \\
&= \langle s_\la,  \sum_{k=0}^n (-1)^k h_{n-k}^\perp(e_k^\perp(s_\mu)g)\rangle 
\end{align*}
by Proposition \ref{prop:appendix}.
Using the Pieri rule (Theorem \ref{thm:pieri}), this amounts to 
\[
 \left \langle\sum_{k=0}^n (-1)^k
    \sum_{\substack{\lp/\la \ (n-k)\text{-}\mathrm{hor.\ strip} \\      \mu/\mm \ k \text{-} \mathrm{vert.\ strip}}} s_{\lp/\mm}\ ,g \right\rangle.
\]
Since the Hall inner product is non-degenerate, we obtain (\ref{equ:skewpieri}).
\end{proof}


\end{document}